\numberwithin{equation}{section}
\newtheorem{Theorem}{Theorem}[section]
\newtheorem*{Theorem*}{Theorem}
\newtheorem{Lemma}[Theorem]{Lemma}
\newtheorem{Proposition}[Theorem]{Proposition}
 { \theoremstyle{definition}

\newtheorem{Remark}[Theorem]{Remark} }
\def\E {\mathbb{E}}
\def\R {\mathbb{R}}
\def\dd {\hskip1pt\mathrm{d}}
\def\tr {\mathop{\mathrm{tr}}}
\def\diag{\mathop{\mathrm{diag}}}
\def\Or{\mathop{\mathrm{Or}}}
\newcommand{\eqdist}{\overset{\mathcal{L}}{=}}
\newcommand{\sym}{{{\hskip-1pt}\mathop{\mathrm{sym}}}}
\begin{document}

\allowdisplaybreaks

\renewcommand{\thefootnote}{}

\newcommand{\arXivNumber}{2402.08663}

\renewcommand{\PaperNumber}{094}

\FirstPageHeading

\ShortArticleName{Complete Asymptotic Expansions for the Normalizing Constants}

\ArticleName{Complete Asymptotic Expansions for the Normalizing\\ Constants of High-Dimensional Matrix Bingham\\ and Matrix Langevin Distributions\footnote{This paper is a~contribution to the Special Issue on Asymptotics and Applications of Special Functions in Memory of Richard Paris. The~full collection is available at \href{https://www.emis.de/journals/SIGMA/Paris.html}{https://www.emis.de/journals/SIGMA/Paris.html}}}

\Author{Armine BAGYAN and Donald RICHARDS}
\AuthorNameForHeading{A.~Bagyan and D.~Richards}

\Address{Department of Statistics, Pennsylvania State University, University Park, PA 16802, USA}
\Email{\href{mailto:aub171@psu.edu}{aub171@psu.edu}, \href{mailto:dsr11@psu.edu}{dsr11@psu.edu}}

\ArticleDates{Received February 28, 2024, in final form October 10, 2024; Published online October 21, 2024}

\Abstract{For positive integers $d$ and $p$ such that $d \ge p$, let $\R^{d \times p}$ denote the set of $d \times p$ real matrices, $I_p$ be the identity matrix of order $p$, and \smash{$V_{d,p} = \bigl\{x \in \R^{d \times p} \mid x'x = I_p\bigr\}$} be the Stiefel manifold in \smash{$\R^{d \times p}$}. Complete asymptotic expansions as $d \to \infty$ are obtained for the normalizing constants of the matrix Bingham and matrix Langevin probability distributions on $V_{d,p}$. The accuracy of each truncated expansion is strictly increasing in $d$; also, for sufficiently large $d$, the accuracy is strictly increasing in $m$, the number of terms in the truncated expansion. Lower bounds are obtained for the truncated expansions when the matrix parameters of the matrix Bingham distribution are positive definite and when the matrix parameter of the matrix Langevin distribution is of full rank. These results are applied to obtain the rates of convergence of the asymptotic expansions as both $d \to \infty$ and $p \to \infty$. Values of $d$ and $p$ arising in numerous data sets are used to illustrate the rate of convergence of the truncated approximations as $d$ or $m$ increases. These results extend recently-obtained asymptotic expansions for the normalizing constants of the high-dimensional Bingham distributions.}

\Keywords{Frobenius norm; generalized hypergeometric function of matrix argument; Grassmann manifold; Hadamard's inequality; hippocampus; neural spike activity; Stiefel manifold; Super Chris (the rat); symmetric cone; zonal polynomial}

\Classification{60E05; 62H11; 62E20; 62R30}

\renewcommand{\thefootnote}{\arabic{footnote}}
\setcounter{footnote}{0}

\section{Introduction}
\label{sec_introduction}

For positive integers $d$ and $p$ such that $d \ge p$ let $\R^{d \times p}$ denote the set of $d \times p$ real matrices, $I_p$ be the identity matrix of order $p$, and \smash{$V_{d,p} = \bigl\{x \in \R^{d \times p} \mid x'x = I_p\bigr\}$} denote the \textit{Stiefel manifold}. The manifold $V_{d,p}$ is endowed with a unique probability measure, denoted by $\dd x$, that is invariant under the change of variables $x \mapsto HxK$, where $x \in V_{d,p}$, and $H$ and $K$ are orthogonal matrices of orders $d \times d$ and $p \times p$, respectively (see Chikuse \cite[p.~16]{Chikuse}, Muirhead \cite[p.~70]{Muirhead}).

In this article, we first consider the matrix Bingham probability distributions on $V_{d,p}$. For non-zero symmetric matrices \smash{$A \in \R^{p \times p}$} and \smash{$\Sigma \in \R^{d \times d}$}, a random matrix $X \in V_{d,p}$ is said to have a {\it $($full$)$ matrix Bingham distribution with parameters $A$ and $\Sigma$} if the probability density function of $X$, with respect to the invariant measure $\dd x$, is
\begin{equation}
\label{eq_mBingham_pdf}
\phi(x;A,\Sigma) = [\Phi_{d,p}(A,\Sigma)]^{-1} \exp\bigl(\tr Ax'\Sigma x\bigr), \qquad x \in V_{d,p},
\end{equation}
where $\Phi_{d,p}(A,\Sigma)$ is the normalizing constant. The matrix Bingham family is now ubiquitous in the statistical analysis of directional data and shape analysis, and the extant literature now provides many properties of the family and its applications in fields including astronomy, biology, computer vision, medicine, meteorology, physics and psychology; see, e.g., \cite{BinghamCR,Chikuse,Dryden,JuppMardia2,KhatriMardia,KumeWood,KumePrestonWood,MardiaJupp,Prentice}.

We also consider the matrix Langevin (or matrix von Mises--Fisher) family of probability distributions on $V_{d,p}$. For non-zero \smash{$B \in \R^{d \times p}$}, a random matrix $X \in V_{d,p}$ is said to have a~{\it matrix Langevin distribution with parameter} $B$ if the probability density function of $X$, with respect to the invariant measure $\dd x$, is
\begin{equation}
\label{eq_mLangevin_pdf}
\psi(x;B) = [\Psi_{d,p}(B)]^{-1} \exp(\tr B'x), \qquad x \in V_{d,p},
\end{equation}
where $\Psi_{d,p}(B)$ is the normalizing constant. These distributions too are widely used in the statistical analysis of shape and directional data; see, e.g., Chikuse~\cite{Chikuse}, Mardia and Jupp~\cite{MardiaJupp}, Oualkacha and Rivest~\cite{OualkachaRivest}.

A major difficulty in statistical inference with the matrix Bingham and matrix Langevin distributions is the calculation of the constants $\Phi_{d,p}(A,\Sigma)$ and $\Psi_{d,p}(B)$. Some authors have described this issue as the ``main obstacle'' to maximum likelihood estimation because the normalizing constants, although known analytically, are difficult to compute in practice (Mantoux, et al.\ \cite[pp.~4,~11]{Mantoux_etal}). For that reason, substantial attention has been paid to the problem of calculating those constants (see Kume and Wood \cite{KumeWood}, and Kume, Preston and Wood \cite{KumePrestonWood}).

In a recent article \cite{BagyanRichards}, we studied the Bingham distributions for the case in which $p = 1$. Under a growth condition on $\Sigma$ we obtained for $\Phi_{d,1}(1,\Sigma)$ a complete asymptotic series expansion in negative powers of $d$, and we derived a bound for the tail of that series upon truncation after, say, $m$ terms. In \cite{BagyanRichards}, we used an explicit formula for the zonal polynomials indexed by partitions with one part; however, as no explicit formula is known for the zonal polynomials in general, a more powerful approach is needed for general $p$, and the purpose of the present article is to provide such an approach.

In the present article, we obtain for all $d$ and $p$ complete asymptotic expansions of $\Phi_{d,p}(A,\Sigma)$ and $\Psi_{d,p}(B)$ in negative powers of $d$; these expansions are derived subject to growth conditions on the Frobenius norms of $\Sigma$ and $B$ as $d \to \infty$. We accomplish these results using detailed properties of the zonal polynomials, including an integral representation due to James \cite{James73}.

We also derive an infinite series bound and a closed-form bound for the remainder term in each asymptotic expansion. These bounds, as they are valid for all $d \ge p$, extend the results in~\cite{BagyanRichards}. We show further that the new bounds decrease strictly to zero as $d$, or as $m$, the number of terms in the truncated expansion, increases; equivalently, the accuracy of each truncated asymptotic expansion is strictly increasing in $d$ or $m$. We devote close attention to the closed-form upper bound as it is likely to be more widely used than its infinite series counterpart; in particular, we derive conditions on the values of the parameters such that the closed-form bound is unimodal in $m$.

When the matrices $A$ and $\Sigma$ in \eqref{eq_mBingham_pdf} are positive definite, we obtain a lower bound on the truncated expansion for $\Phi_{d,p}(A,\Sigma)$. Further, we derive a lower bound for the truncated expansion of $\Psi_{d,p}(B)$ when $B$ is of full rank. These lower bounds can be expressed in closed-form, and they decrease to zero as $d \to \infty$ or $m \to \infty$.

The contents of the article are as follows. In Section \ref{sec_matrix_bingham}, we obtain the asymptotic expansion for $\Phi_{d,p}(A,\Sigma)$ and we remark on the properties of the expansion, and in Section \ref{sec_matrix_langevin}, we obtain a similar expansion for $\Psi_{d,p}(B)$. In Section \ref{sec_implications}, we survey the range of values of $d$ and $p$ that have appeared in the literature on analyses of data on Stiefel manifolds, describe the rate of convergence to zero of each bound as $d$ or $m$ increases, and conclude by noting that our results extend to more general classes of distributions on the Stiefel and Grassmann manifolds and to the complex analogs of the matrix Bingham and matrix Langevin distributions. We provide in Section \ref{sec_zonal_polys} some necessary properties of the zonal polynomials, and all proofs are given in Sections \ref{sec_ineqs}--\ref{sec_proofs_Psi_bounds}.

\section[Complete asymptotics for the normalizing constant of the high-dimensional matrix Bingham distribution]{Complete asymptotics for the normalizing constant\\ of the high-dimensional matrix Bingham distribution}
\label{sec_matrix_bingham}

A \textit{partition} $\kappa = (\kappa_1,\ldots,\kappa_d)$ is a vector of integers such that $\kappa_1 \ge \cdots \ge \kappa_d \ge 0$. The non-zero~$\kappa_j$~are called the \textit{parts} of $\kappa$. The \textit{length} of $\kappa$, denoted by $\ell(\kappa)$, is the number of non-zero~$\kappa_j$,~$j=1,\ldots,d$, and the \textit{weight} of $\kappa$ is $|\kappa| = \kappa_1+\cdots+\kappa_d$.

Define the \textit{shifted factorial}, $(a)_k = a(a+1)(a+2) \cdots (a+k-1)$, where $a \in \mathbb{R}$ and ${k=0,1,2,\ldots}$; in particular, $(a)_0 \equiv 1$ since the defining product is empty. For $a \in \mathbb{R}$ and a partition ${\kappa = (\kappa_1,\ldots,\kappa_d)}$, the {\it partitional shifted factorial} is 
\begin{equation}
\label{eq_partit_fact}
(a)_\kappa = \prod_{i=1}^d \left(a-\frac{1}{2}(i-1)\right)_{\kappa_i}.
\end{equation}

Assume that \smash{$\R^{d \times d}_\sym$} denote the space of real symmetric $d \times d$ matrices. For a partition $\kappa = (\kappa_1,\ldots,\kappa_d)$ and \smash{$\Sigma \in \R^{d \times d}_\sym$}, the \textit{zonal polynomial} $C_\kappa(\Sigma)$ is an eigenfunction of the Laplace--Beltrami operator in the eigenvalues of $\Sigma$ (see \cite{James68}, \cite[Chapter 7]{Muirhead} and \cite{Richards10}). Further, $C_\kappa(\Sigma)$~depends only on the eigenvalues of $\Sigma$, and is symmetric in those eigenvalues.

Tables of the polynomials $C_\kappa(\Sigma)$ for $|\kappa| \le 12$ were provided by James \cite[pp.~498--499]{James64} and Parkhurst and James \cite{ParkhurstJames}. Jiu and Koutschan \cite{JiuKoutschan} have provided software to calculate rapidly the zonal polynomials to high values of $|\kappa|$, so the asymptotic expansions obtained in this article can be calculated for practical values of $m$.

Let $X \in V_{d,p}$ be a random matrix having the matrix Bingham distribution with the density function $\phi(x;A,\Sigma)$ as in \eqref{eq_mBingham_pdf}. It is well known that the normalizing constant $\Phi_{d,p}(A,\Sigma)$ can be expressed as a generalized hypergeometric function of two matrix arguments, denoted by~${}_0F_0(A,\Sigma)$, or by the zonal polynomial expansion of that function,
\begin{equation}
\label{eq_normaliz_const_B}
\Phi_{d,p}(A,\Sigma) = {}_0F_0(A,\Sigma) = \sum_{k=0}^\infty \frac{1}{k!} \sum_{|\kappa|=k} \frac{C_\kappa(A) C_\kappa(\Sigma)}{C_\kappa(I_d)},
\end{equation}
where the inner sum is over all partitions $\kappa$ such that $\ell(\kappa) \le p$. This formula for $\Phi_{d,p}(A,\Sigma)$ in terms of ${}_0F_0(A,\Sigma)$ dates to De Waal \cite{DeWaal}; cf.\ Bingham \cite{Bingham}, Chikuse \cite[p.~108]{Chikuse}. By applying the methods of \cite[Theorem~6.3]{GrossRichards}, we find that the series \eqref{eq_normaliz_const_B} converges absolutely for all $(A,\Sigma)$ and uniformly on compact regions.

For the special case in which $A = I_p$, on applying to \eqref{eq_normaliz_const_B} the identity \eqref{eq_kappa_invar}, written in the form $C_\kappa(I_p)/C_\kappa(I_d) = (p/2)_\kappa/(d/2)_\kappa$, we obtain the normalizing constant in terms of a confluent hypergeometric function of matrix argument,
\[
\Phi_{d,p}(I_p,\Sigma) = \sum_{k=0}^\infty \frac{1}{k!} \sum_{|\kappa|=k} \frac{(p/2)_\kappa}{(d/2)_\kappa} C_\kappa(\Sigma) = {}_1F_1\left(\frac12 p;\frac12 d;\Sigma\right),
\]
as shown by various authors, e.g., Bingham \cite{Bingham}, Chikuse \cite[p.~33]{Chikuse} and Muirhead \cite[p.~288]{Muirhead}.

Similar to \cite[equation~(2.4)]{BagyanRichards}, when $d$ is large, our asymptotic approximation to $\Phi_{d,p}(A,\Sigma)$ is the sum of the terms up to degree $m-1$ of the series \eqref{eq_normaliz_const_B}, i.e.,
\begin{equation}
\label{eq_trunc_expan}
\Phi_{d,p}(A,\Sigma) \approx \sum_{k=0}^{m-1} \frac{1}{k!} \sum_{|\kappa|=k} \frac{C_\kappa(A) C_\kappa(\Sigma)}{C_\kappa(I_d)},
\end{equation}
$m \ge 2$, and now we study the remainder series
\begin{equation}
\label{eq_remainder_term}
\Phi_{d,p;m}(A,\Sigma) = \sum_{k=m}^\infty \frac{1}{k!} \sum_{|\kappa|=k} \frac{C_\kappa(A) C_\kappa(\Sigma)}{C_\kappa(I_d)}.
\end{equation}

In the sequel, we will encounter the constants \smash{$\alpha_p = (2\pi)^{-(p-1)/4p} p^{1/4p}$} and $\gamma_1 = \bigl(\sqrt{3}+1\bigr)/2$, arising in Lemmas \ref{lemma_factorial_ineq} and \ref{lemma_ineqs}, respectively. Also, if $a_1,\ldots,a_p$ are the eigenvalues of $A$, then we~define $A_+ = \diag(|a_1|,\ldots,|a_p|)$, the diagonal matrix with diagonal entries $|a_1|,\ldots,|a_p|$. In~the following result, the proof of which is given in Section \ref{sec_proofs_Phi_bounds}, we extend a result in~\cite{BagyanRichards} by~deriving an upper bound for $\Phi_{d,p;m}(A,\Sigma)$ in terms of the Frobenius norm, \smash{$\|\Sigma\| = \bigl[\tr\bigl(\Sigma^2\bigr)\bigr]^{1/2}$}.

\begin{Theorem}
\label{thm_Phi_m_bound_Bingham}
Let $A \in \R^{p \times p}_\sym$, \smash{$\Sigma \in \R^{d \times d}_\sym$}, and suppose that \smash{$\|\Sigma\| \le \gamma_0 d^{r/2}$}, where $\gamma_0 > 0$ and $0 \le r < 1$. Then for all $m \ge 2$,
\begin{align}
|\Phi_{d,p;m}(A,\Sigma)| \le{}& \alpha_p \sum_{k=m}^\infty \frac{\bigl[\gamma_0 \gamma_1 p^{1/2} (\tr A_+) d^{-(1-r)/2}\bigr]^k}{(k!)^{1/2}} \label{eq_Phi_m_bound} \\
\le{}& \alpha_p (4 {\rm e}/\pi)^{1/4} ({\rm e}/m)^{(m/2)-(1/4)}
\bigl[\gamma_0 \gamma_1 p^{1/2} (\tr A_+) d^{-(1-r)/2}\bigr]^m \nonumber \\
& \times \exp\bigl(c_m \bigl[\gamma_0 \gamma_1 p^{1/2} (\tr A_+) d^{-(1-r)/2}\bigr]^2/2\bigr), \label{eq_Phi_m_boundw}
\end{align}
where $c_m = \bigl(1+m^{-1}\bigr)^{-m} {\rm e}$. In particular, $\Phi_{d,p;m}(A,\Sigma) = O\bigl(d^{-(1-r)m/2}\bigr)$ as $d \to \infty$.
\end{Theorem}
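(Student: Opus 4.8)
The plan is to bound the remainder series \eqref{eq_remainder_term} term by term, estimating each partition-indexed summand $C_\kappa(A)C_\kappa(\Sigma)/C_\kappa(I_d)$ separately and then summing over $|\kappa|=k$. First I would split the summand into two factors: a ratio $C_\kappa(A)/C_\kappa(I_d)$ that I intend to control by the crude monotonicity bound $C_\kappa(A) \le C_\kappa(A_+)$ together with the identity $C_\kappa(I_p)/C_\kappa(I_d) = (p/2)_\kappa/(d/2)_\kappa$ quoted above \eqref{eq_trunc_expan}, and a factor $C_\kappa(\Sigma)$ that I would bound in terms of the Frobenius norm $\|\Sigma\|$. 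The key input for the $\Sigma$-factor should be an inequality of the form $C_\kappa(\Sigma) \le (\text{const})^{|\kappa|}\, C_\kappa(I_p)\, \|\Sigma\|^{|\kappa|}/\sqrt{|\kappa|!}$ or similar, which I expect is exactly what Lemma~\ref{lemma_ineqs} supplies (the constant $\gamma_1 = (\sqrt 3+1)/2$ announced there strongly suggests this). Likewise the constant $\alpha_p = (2\pi)^{-(p-1)/4p}p^{1/4p}$ should come from Lemma~\ref{lemma_factorial_ineq}, presumably a Stirling-type estimate that lets one pass from $\sum_{|\kappa|=k} C_\kappa(A_+)(p/2)_\kappa/(d/2)_\kappa$-type sums to a clean power of $(\tr A_+)$ divided by $(k!)^{1/2}$.

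Granting those two lemmas, the inner sum over $|\kappa|=k$ should collapse: using $\sum_{|\kappa|=k} C_\kappa(X) = (\tr X)^k$ (for the appropriate normalization of zonal polynomials, with $\ell(\kappa)\le p$ automatically when $X$ has order $p$), the $A$-dependence aggregates to $(\tr A_+)^k$, and the growth hypothesis $\|\Sigma\| \le \gamma_0 d^{r/2}$ converts $\|\Sigma\|^k d^{-k/2}$-type factors into $(\gamma_0 d^{-(1-r)/2})^k$. Combining the constants from the two lemmas with the $1/k!$ prefactor in \eqref{eq_remainder_term} and one surviving $1/\sqrt{k!}$ from the zonal-polynomial/Frobenius estimate should yield precisely
\[
|\Phi_{d,p;m}(A,\Sigma)| \le \alpha_p \sum_{k=m}^\infty \frac{\bigl[\gamma_0\gamma_1 p^{1/2}(\tr A_+) d^{-(1-r)/2}\bigr]^k}{(k!)^{1/2}},
\]
which is \eqref{eq_Phi_m_bound}. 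For the closed-form bound \eqref{eq_Phi_m_boundw}, I would write $t = \gamma_0\gamma_1 p^{1/2}(\tr A_+)d^{-(1-r)/2}$, factor out the leading term $t^m/\sqrt{m!}$, and bound the tail $\sum_{k\ge m} t^k/\sqrt{k!}$ by comparing $\sqrt{k!}$ for $k\ge m$ with $\sqrt{m!}\,(\text{something})^{k-m}$; a convexity/ratio argument gives $\sqrt{k!/m!}\ge (m!)^{(k-m)/(2m)}\cdot(\text{correction})$, after which the residual sum is dominated by a single exponential $\exp(c_m t^2/2)$ with $c_m=(1+m^{-1})^{-m}\mathrm e$. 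Finally, applying Stirling to $1/\sqrt{m!}$ produces the explicit constant $(4\mathrm e/\pi)^{1/4}(\mathrm e/m)^{(m/2)-(1/4)}$, and the stated order estimate $O\bigl(d^{-(1-r)m/2}\bigr)$ is immediate since $t^m$ carries exactly that power of $d$ while the exponential factor tends to $1$.

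The main obstacle, I expect, is not the bookkeeping of constants but verifying the two zonal-polynomial inequalities cleanly — in particular the Frobenius-norm bound for $C_\kappa(\Sigma)$, since no closed form for $C_\kappa$ is available and one must instead lean on James's integral representation \cite{James73} together with Hadamard's inequality (both flagged in the keywords) to extract the $\|\Sigma\|^{|\kappa|}$ dependence with a universal constant. However, since both inequalities are attributed to Lemmas~\ref{lemma_factorial_ineq} and~\ref{lemma_ineqs} proved earlier, in this proof I may simply invoke them, and the remaining work is the elementary summation and the Stirling estimate sketched above. A secondary technical point worth care is ensuring the length restriction $\ell(\kappa)\le p$ is respected throughout so that $\sum_{|\kappa|=k}C_\kappa(A_+)=(\tr A_+)^k$ is applied with the matrix of the correct order $p$, not $d$.
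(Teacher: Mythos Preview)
Your high-level architecture is right and matches the paper: bound each summand, use a Frobenius-norm inequality for $C_\kappa(\Sigma)$, collapse the inner sum via $\sum_{|\kappa|=k}C_\kappa(A_+)=(\tr A_+)^k$, then handle the tail $R_m(t)=\sum_{k\ge m}t^k/\sqrt{k!}$. But you have mis-guessed the shape of the key zonal inequality, and your detour through $(p/2)_\kappa/(d/2)_\kappa$ is not what happens. The paper's Proposition~\ref{prop_zonal_bound} gives
\[
|C_\kappa(\Sigma)| \;\le\; \Bigg(\prod_{i=1}^p \frac{(d^{1/2}/2)_{p\kappa_i}}{(d/2)_{p\kappa_i}}\Bigg)^{1/p} C_\kappa(I_d)\,\|\Sigma\|^{|\kappa|},
\]
with $C_\kappa(I_d)$, not $C_\kappa(I_p)$, so the $C_\kappa(I_d)$ in the denominator of \eqref{eq_remainder_term} cancels outright and the identity $C_\kappa(I_p)/C_\kappa(I_d)=(p/2)_\kappa/(d/2)_\kappa$ is never used here. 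Lemma~\ref{lemma_ineqs} then bounds the product factor by $\alpha_p\,(|\kappa|!)^{1/2}\gamma_1^{|\kappa|}p^{|\kappa|/2}d^{-|\kappa|/2}$ --- note the factorial is in the \emph{numerator}, not the denominator as you guessed; the $1/\sqrt{k!}$ in \eqref{eq_Phi_m_bound} arises from this $(k!)^{1/2}$ meeting the $1/k!$ prefactor. Crucially, this bound depends only on $|\kappa|$, so $\sum_{|\kappa|=k}C_\kappa(A_+)$ collapses cleanly to $(\tr A_+)^k$ with no $\kappa$-dependent residue to ``Stirling away.''

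For the closed-form bound \eqref{eq_Phi_m_boundw}, your sketch (factor out $t^m/\sqrt{m!}$, ratio/convexity, then Stirling) is in the right spirit but the paper's argument is a bit more elaborate: it applies Stirling \emph{first} to replace $1/\sqrt{k!}$ by $k^{-(2k+1)/4}e^{k/2}$, then runs a telescoping estimate on the ratios $a_{j+1}/a_j$ to extract the constant $c_m=(1+m^{-1})^{-m}e$, and finally applies Cauchy--Schwarz to split the resulting sum into $\sum_{k\ge m}k^{-3/2}$ (bounded by an integral) and $\sum_{k\ge m}c_m^k t^{2k}/(k-1)!$ (bounded by the full exponential series). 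Your convexity idea for $\sqrt{k!/m!}$ would not by itself produce the specific factor $\exp(c_mt^2/2)$ with that particular $c_m$.
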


\begin{Remark}\label{rem_bound_Bingham}\quad
\begin{itemize}\itemsep=0pt
\item[(i)] The growth condition, \smash{$\|\Sigma\| \le \gamma_0 d^{r/2}$} with $\gamma_0 > 0$ and $r \in [0,1)$, is the same as was assumed in \cite{BagyanRichards} for the Bingham distribution, i.e., the case $p=1$. Further, the bounds \eqref{eq_Phi_m_bound} and~\eqref{eq_Phi_m_boundw} provide a rate of convergence, viz., \smash{$\Phi_{d,p;m}(A,\Sigma) = O\bigl(d^{-m(1-r)/2}\bigr)$} as $d \to \infty$, that is the same as we obtained in \cite{BagyanRichards} for the case in which $p = 1$. On the other hand, unlike the bounds in \cite{BagyanRichards}, which are valid only for sufficiently large $d$, the bounds \eqref{eq_Phi_m_bound} and \eqref{eq_Phi_m_boundw} are valid for all $d \ge p \ge 1$ and even for all $r \in \R$.

\item[(ii)] The bounds \eqref{eq_Phi_m_bound} and \eqref{eq_Phi_m_boundw} each are increasing in $\gamma_0$ and in $r$; this is consistent with the restriction \smash{$\|\Sigma\| \le \gamma_0 d^{r/2}$} representing a larger matrix region as $\gamma_0$ or $r$ increases.

\item[(iii)] Each bound is also strictly increasing in $p$ or $\tr(A_+)$ for fixed $d$, $m$, $\gamma_0$, and $r$. This follows from the fact that, in the $k$th summand in the bound in \eqref{eq_Phi_m_bound}, the term \smash{$\alpha_p p^{k/2}$} is strictly increasing in $p$; therefore, the bound itself is strictly increasing. A similar argument also yields the same conclusion for the bound in \eqref{eq_Phi_m_boundw}.

\item[(iv)] It is evident that the bound in \eqref{eq_Phi_m_bound}, as a function of $d$ or $m$, decreases strictly to zero as~${d \to \infty}$ or $m \to \infty$. Equivalently, the accuracy of the truncated asymptotic expansion~\eqref{eq_trunc_expan} is strictly increasing in $d$ or $m$.
Further, it is evident that the closed-form bound in~\eqref{eq_Phi_m_boundw} is strictly decreasing in $d$.

\item[(v)] The bound \eqref{eq_Phi_m_bound} on $\Phi_{d,p;m}(A,\Sigma)$ also allows us to obtain asymptotics as both $d$ and $p$ tend to infinity. By \eqref{eq_alpha_p}, \smash{$\alpha_p \to (2\pi)^{-1/4}$} as $p \to \infty$, and therefore we deduce that $\Phi_{d,p;m}(A,\Sigma) = O(1)$ if \smash{$(\tr A_+)^2 p d^{-(1-r)} = O(1)$}, i.e., if \smash{$(\tr A_+)^2 p = O\bigl(d^{1-r}\bigr)$}.
 \end{itemize}
\end{Remark}

\begin{Remark}
\label{rem_boundw_Bingham}\quad
\begin{itemize}\itemsep=0pt
\item[(i)] The behavior of the closed-form bound \eqref{eq_Phi_m_boundw}, as a function of $m$, is more intricate. We~shall prove in Section \ref{sec_proofs_Phi_bounds} that the bound \eqref{eq_Phi_m_boundw} is strictly decreasing in $m$ if $d$ is sufficiently large, precisely, if \smash{$d \ge \bigl[\gamma_0 \gamma_1 p^{1/2} (\tr A_+)\bigr]^{2/(1-r)}$}.

\item[(ii)] For \smash{$d < \bigl[\gamma_0 \gamma_1 p^{1/2} (\tr A_+)\bigr]^{2/(1-r)}$}, the bound \eqref{eq_Phi_m_boundw}, as a function of $m$, is unimodal. Therefore, for smaller values of $d$, the bound \eqref{eq_Phi_m_boundw} may increase for small values of $m$, but it eventually attains a maximum value and then decreases strictly to zero thereafter. This shows again that if $d$ is small, then higher values of $m$ may need to be chosen in order for the approximation \eqref{eq_trunc_expan} to result in small errors, and the bounds \eqref{eq_Phi_m_bound} and \eqref{eq_Phi_m_boundw} enable us to ascertain suitable values of $m$ in order for the approximation \eqref{eq_trunc_expan} to achieve any required level of accuracy.
\end{itemize}
\end{Remark}

To close this section, we obtain a lower bound for the remainder term $\Phi_{d,p;m}(A,\Sigma)$ for the case in which $A$ and $\Sigma$ are arbitrary positive definite matrices.

\begin{Theorem}
\label{thm_Phi_m_lower_bound_Bingham}
Let $A \in \R^{p \times p}_\sym$ and \smash{$\Sigma \in \R^{d \times d}_\sym$} be positive definite, let \smash{$\sigma_{(1)} \ge \cdots \ge \sigma_{(d)}$} denote the ordered eigenvalues of $\Sigma$, and set $N_d = \tfrac12(d+2)(d-1)$. Then for all $m \ge 1$,
\begin{equation}
\label{eq_Phi_m_lower_bound_Bingham}
\Phi_{d,p;m}(A,\Sigma) \ge (2+m)^{-mN_d} (1+m)^{-(1+m)N_d} \sum_{k=m}^\infty \left(\frac{1+m}{2+m}\right)^{kN_d} \frac{(\sigma_{(p)} \tr A)^k}{k!}.
\end{equation}
\end{Theorem}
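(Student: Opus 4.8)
The plan is to exploit the positivity of every term in \eqref{eq_remainder_term}. Since $A$ and $\Sigma$ are positive definite, each zonal polynomial $C_\kappa(A)$, $C_\kappa(\Sigma)$ and $C_\kappa(I_d)$ is strictly positive, so $\Phi_{d,p;m}(A,\Sigma)$ is a series of positive terms and may be bounded below term by term. First I would separate the dependence on $A$: as $A$ is $p\times p$ we have $C_\kappa(A)=0$ whenever $\ell(\kappa)>p$, so the classical normalization $\sum_{|\kappa|=k}C_\kappa(X)=(\tr X)^k$ gives $\sum_{|\kappa|=k}C_\kappa(A)=(\tr A)^k$ with the sum ranging, as in \eqref{eq_remainder_term}, over $\ell(\kappa)\le p$. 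Consequently, once a bound $C_\kappa(\Sigma)/C_\kappa(I_d)\ge\rho_k\,\sigma_{(p)}^{\,|\kappa|}$ is established \emph{uniformly} over all partitions $\kappa$ of weight $k$ with $\ell(\kappa)\le p$, for an explicit $\rho_k>0$, one obtains
\[
\Phi_{d,p;m}(A,\Sigma)\ \ge\ \sum_{k=m}^{\infty}\frac{\rho_k\,\sigma_{(p)}^{\,k}}{k!}\sum_{|\kappa|=k}C_\kappa(A)\ =\ \sum_{k=m}^{\infty}\frac{\rho_k\,(\sigma_{(p)}\tr A)^{k}}{k!},
\]
and it remains only to check that $\rho_k\ge(2+m)^{-mN_d}(1+m)^{-(1+m)N_d}\bigl(\tfrac{1+m}{2+m}\bigr)^{kN_d}$ for all $k\ge m$.

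For the uniform bound on $C_\kappa(\Sigma)/C_\kappa(I_d)$, I would first reduce $\Sigma$ to its $p$ largest eigenvalues. Zonal polynomials are symmetric functions of the eigenvalues with non-negative coefficients in the monomial symmetric functions, hence are non-decreasing in each non-negative eigenvalue; lowering $\sigma_{(1)},\dots,\sigma_{(p)}$ to $\sigma_{(p)}$ and $\sigma_{(p+1)},\dots,\sigma_{(d)}$ to $0$, and then invoking the stability $C_\kappa(\diag(Y,0))=C_\kappa(Y)$ valid for $\ell(\kappa)\le p$, yields $C_\kappa(\Sigma)\ge\sigma_{(p)}^{\,|\kappa|}C_\kappa(I_p)$. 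Using the invariance identity $C_\kappa(I_p)/C_\kappa(I_d)=(p/2)_\kappa/(d/2)_\kappa$ recalled in the excerpt, the task becomes a uniform lower bound for the partitional shifted factorial ratio $(p/2)_\kappa/(d/2)_\kappa=\prod_{i=1}^{\ell(\kappa)}\bigl(\tfrac{p-i+1}{2}\bigr)_{\kappa_i}\big/\bigl(\tfrac{d-i+1}{2}\bigr)_{\kappa_i}$. Estimating each factor $(a)_n/(b)_n$ with $a=\tfrac{p-i+1}{2}\le b=\tfrac{d-i+1}{2}$ by writing $(b)_n/(a)_n=\prod_{j=0}^{n-1}\bigl(1+\tfrac{b-a}{a+j}\bigr)$ and using $1+x\le {\rm e}^x$ (so that the growth in $n$ is only polynomial, not exponential), one bounds $(d/2)_\kappa/(p/2)_\kappa$ from above and extracts $\rho_k$; the exponent $N_d=\tfrac12(d+2)(d-1)=\binom{d+1}{2}-1$ — the dimension of the cone of $d\times d$ positive definite matrices of unit determinant — enters as an easy majorant of the accumulated exponent $\sum_i\bigl(\tfrac{d-i+1}{2}-\tfrac{p-i+1}{2}\bigr)=\tfrac12 p(d-p)$ together with the bounded constants left by these estimates, giving, say, $\rho_k\ge(2+k)^{-N_d}$. (An alternative route to the same zonal-polynomial estimates goes through James's integral representation of $C_\kappa(\Sigma)$ as an average over $\mathrm O(d)$ of a product of powers of the leading principal minors of $H\Sigma H'$, bounded below by Hadamard's inequality.)

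Finally, since $k\mapsto(2+k)^{-N_d}$ is decreasing, a short estimate comparing the two sides along $k\ge m$ — concretely, $(m+k)\log(2+m)-(k-m-1)\log(1+m)\ge\log(2+k)$ for all $k\ge m\ge1$, which follows from the concavity of the logarithm (the left side dominates at $k=m$ and has the larger $k$-derivative there and beyond) — gives $(2+k)^{-N_d}\ge(2+m)^{-mN_d}(1+m)^{-(1+m)N_d}\bigl(\tfrac{1+m}{2+m}\bigr)^{kN_d}$; substituting this into the displayed estimate produces exactly \eqref{eq_Phi_m_lower_bound_Bingham}. The main obstacle is the middle step: the uniform-in-$\kappa$ lower bound for $C_\kappa(\Sigma)/C_\kappa(I_d)$ that trades the least eigenvalue of $\Sigma$ for $\sigma_{(p)}$ while keeping the accompanying penalty in closed form — i.e., the careful control of the gamma-function (equivalently, shifted factorial) ratios so that the surviving exponent is cleanly bounded by $N_d$ and all remaining $d$- and $p$-dependence is absorbed. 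The concluding re-indexing in $m$ is routine but must be carried out so as to remain valid for every $m\ge1$.
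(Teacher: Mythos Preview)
Your overall architecture matches the paper's: use positivity of every term, extract a uniform-in-$\kappa$ lower bound $C_\kappa(\Sigma)/C_\kappa(I_d)\ge\rho_k\,\sigma_{(p)}^{\,k}$, collapse $\sum_{|\kappa|=k}C_\kappa(A)=(\tr A)^k$, and then pass from $\rho_k$ to the displayed $m$-dependent constant. Your reduction $C_\kappa(\Sigma)\ge\sigma_{(p)}^{\,k}C_\kappa(I_p)$ and the use of $C_\kappa(I_p)/C_\kappa(I_d)=(p/2)_\kappa/(d/2)_\kappa$ are both fine, and your final inequality $(2+k)^{-N_d}\ge(2+m)^{-mN_d}(1+m)^{-(1+m)N_d}\bigl(\tfrac{1+m}{2+m}\bigr)^{kN_d}$ is correct (and mirrors the paper's telescoping of $a_j=(1+j)^{-N_d}\tau^j/j!$).

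The gap is the middle step. Your $1+x\le e^x$ estimate on $(d/2)_\kappa/(p/2)_\kappa$ produces a bound of the shape $C_{d,p}\cdot k^{c}$ with $c\sim \tfrac12 p(d-p)$ and a leftover constant $C_{d,p}$ of order $\exp\bigl((d-p)H_p\bigr)$ coming from the $j=0$ terms of the harmonic sums. Declaring that ``$N_d$ enters as an easy majorant'' and that the constants are ``absorbed'' does not yet prove $(p/2)_\kappa/(d/2)_\kappa\ge(2+k)^{-N_d}$ uniformly in $d\ge p\ge1$ and $k\ge1$; this is a genuine inequality that you have not verified. In the paper the exponent $N_d$ is \emph{not} obtained from shifted-factorial estimates at all: it comes from Proposition~\ref{prop_zonal_lower_bound}, i.e.\ the Faraut--Kor\'anyi lower bound $C_\kappa(\Sigma)\ge(1+|\kappa|)^{-N_d}C_\kappa(I_d)\,\sigma_{(1)}^{\kappa_1}\cdots\sigma_{(p)}^{\kappa_p}$, where $(1+k)^{N_d}$ is a bound on the dimension $d_\kappa$ of a space of degree-$k$ polynomials on the cone and the beta-function ratio $T_2$ is shown to be $\le1$. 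That is why $N_d$ (rather than $\tfrac12 p(d-p)$) appears, and no residual constants need absorbing.

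One further caution: your parenthetical ``alternative route'' via James's integral and Hadamard's inequality cannot work as stated. Hadamard bounds the principal minors of $H'\Sigma H$ from \emph{above}, which is how the paper proves the \emph{upper} bound in Proposition~\ref{prop_zonal_bound}; it does not furnish a lower bound on $C_\kappa(\Sigma)$.
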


\begin{Remark}
\label{rem_Phi_m_lower_bound_Bingham}
Let $W$ be a discrete random variable having a Poisson distribution with parameter~\smash{$\mu = \bigl(\frac{1+m}{2+m}\bigr)^{N_d} \sigma_{(p)} \tr A$}.
Then \eqref{eq_Phi_m_lower_bound_Bingham} can be written as
\begin{equation}
\label{eq_S_tau_lb}
\Phi_{d,p;m}(A,\Sigma) \ge (2+m)^{mN_d} (1+m)^{-(1+m)N_d} {\rm e}^{\mu} \mathbb{P}(W \ge m),
\end{equation}
and this result reveals several properties of the lower bound. First, by using the formula
${\mathbb{P}(W \ge m) = 1 - \mathbb{P}(W < m)}$,
it is seen that the right-hand side of \eqref{eq_S_tau_lb} can be expressed in closed-form. Second, as $m \to \infty$, it is simple to show that the right-hand side of \eqref{eq_S_tau_lb} is asymptotically equal to ${\rm e}^{N_d+\mu} m^{-N_d} \mathbb{P}(W \ge m)$, where $W$ is Poisson-distributed with parameter~${\mu \simeq \sigma_{(p)}\tr A}$, and this asymptotic lower bound converges to $0$ as $m \to \infty$.

Third, suppose that $\mu$ is large. By the central limit theorem, $(W-\mu)/\sqrt{\mu}$ is approximately distributed as $Z$, a random variable having the standard normal distribution (with mean $0$ and standard deviation $1$). Applying the usual correction for continuity that is used whenever, the distribution of a discrete random variable is approximated by the distribution of a continuous random variable, we obtain
\[
\mathbb{P}(W \ge m) = \mathbb{P}\left(W \ge m-\frac12\right) = \mathbb{P}\left(\frac{W-\mu}{\sqrt{\mu}} \ge \frac{m-\tfrac12-\mu}{\sqrt{\mu}}\right) \simeq \mathbb{P}\left(Z \ge \frac{m-\tfrac12-\mu}{\sqrt{\mu}}\right),
\]
which can be evaluated numerically using the standard normal distribution function.

Fourth, it is evident that the lower bound in \eqref{eq_Phi_m_lower_bound_Bingham} decreases strictly to $0$ as $d \to \infty$.

\end{Remark}

\section[Complete asymptotics for the normalizing constant of the high-dimensional matrix Langevin distribution]{Complete asymptotics for the normalizing constant\\ of the high-dimensional matrix Langevin distribution}
\label{sec_matrix_langevin}

Let $X \in V_{d,p}$ be a random matrix having the matrix Langevin distribution with the density function $\psi(x;A,\Sigma)$ given in \eqref{eq_mLangevin_pdf}. The normalizing constant $\Psi_{d,p}(B)$ can be expressed as a~generalized hypergeometric function of matrix argument,
\[
\Psi_{d,p}(B) = {}_0F_1\left(\frac12;\frac14B'B\right) = \sum_{k=0}^\infty \frac{1}{k!} \sum_{|\kappa|=k} \frac{C_\kappa\bigl(\frac14 B'B\bigr)}{(d/2)_\kappa},
\]
cf.\ Chikuse \cite[p.~31]{Chikuse}. By applying \cite[Theorem 6.3]{GrossRichards}, we find that this series converges absolutely for all \smash{$B \in \R^{d \times p}$} and uniformly on compact regions.

Similar to the approach in Section \ref{sec_matrix_bingham}, we use as an asymptotic approximation to $\Psi_{d,p}(B)$ the truncated series,
\[
\Psi_{d,p}(B) \approx \sum_{k=0}^{m-1} \frac{1}{k!} \sum_{|\kappa|=k} \frac{C_\kappa\bigl(\frac14 B'B\bigr)}{(d/2)_\kappa},
\]
$m \ge 2$, and we derive for the remainder series,
\begin{equation}
\label{eq_Psi_m_Langevin}
\Psi_{d,p;m}(B) = \sum_{k=m}^\infty \frac{1}{k!} \sum_{|\kappa|=k} \frac{C_\kappa\bigl(\frac14 B'B\bigr)}{(d/2)_\kappa},
\end{equation}
an upper bound that is a complete asymptotic expansion in negative powers of $d$. The proof of the following result is given in Section \ref{sec_proofs_Psi_bounds}, and the constants \smash{$\alpha_p = (2\pi)^{-(p-1)/4p} p^{1/4p}$} and $\gamma_1 = \bigl(\sqrt{3}+1\bigr)/2$ again appear in the proof.

\begin{Theorem}
\label{thm_Psi_m_bound_Langevin}
Let \smash{$B \in \R^{d \times p}$} be such that \smash{$\|B\| \le 2 \gamma_0^{1/2} d^{r/4}$}, where $\gamma_0 > 0$ and $0 \le r < 3$. Then for all $m \ge 2$, $\Psi_{d,p;m}(B) \ge 0$ and
\begin{align}
\Psi_{d,p;m}(B) \le{}& \alpha_p \sum_{k=m}^\infty \frac{\bigl[2 \gamma_0 \gamma_1 p^{5/2} d^{-(3-r)/2}\bigr]^k}{(k!)^{1/2}} \label{eq_Psi_m_bound} \\
\le{}& \alpha_p (4{\rm e}/\pi)^{1/4} ({\rm e}/m)^{(m/2)-(1/4)} \bigl[2 \gamma_0 \gamma_1 p^{5/2} d^{-(3-r)/2}\bigr]^m \nonumber \\
& \times \exp\bigl(c_m \bigl[2 \gamma_0 \gamma_1 p^{5/2} d^{-(3-r)/2}\bigr]^2/2\bigr). \label{eq_Psi_m_boundw}
\end{align}
In particular, $\Psi_{d,p;m}(B) = O\bigl(d^{-(3-r)m/2}\bigr)$ as $d \to \infty$.
\end{Theorem}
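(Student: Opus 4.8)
The plan is to run the proof of Theorem~\ref{thm_Phi_m_bound_Bingham} in parallel, with the $p\times p$ matrix $\tfrac14 B'B$ in the role of the parameter matrix, using Lemmas~\ref{lemma_factorial_ineq} and~\ref{lemma_ineqs} and the invariance identity~\eqref{eq_kappa_invar}. The non-negativity $\Psi_{d,p;m}(B)\ge 0$ is immediate: $\tfrac14 B'B$ is positive semidefinite, so $C_\kappa(\tfrac14 B'B)\ge 0$ for every partition $\kappa$, and $(d/2)_\kappa=\prod_{i=1}^{p}\bigl((d-i+1)/2\bigr)_{\kappa_i}>0$ since $d\ge p$ forces each base $(d-i+1)/2$ to be at least $1/2$; hence every term of~\eqref{eq_Psi_m_Langevin} is non-negative.

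For the upper bound I would first trade the denominator $(d/2)_\kappa$ for the denominator $C_\kappa(I_d)$ appearing in the Bingham expansion. Writing~\eqref{eq_kappa_invar} as $C_\kappa(I_d)/(d/2)_\kappa=C_\kappa(I_p)/(p/2)_\kappa$ and viewing $\tfrac14 B'B$ as the $d\times d$ matrix $\diag\bigl(\tfrac14 B'B,\,0\bigr)$ — harmless since $\ell(\kappa)\le p$ — each summand factors as
\[
\frac{C_\kappa\bigl(\tfrac14 B'B\bigr)}{(d/2)_\kappa}=\frac{C_\kappa(I_p)}{(p/2)_\kappa}\cdot\frac{C_\kappa\bigl(\tfrac14 B'B\bigr)}{C_\kappa(I_d)}.
\]
The first factor is bounded crudely by $C_\kappa(I_p)/(p/2)_\kappa\le(2p)^{|\kappa|}$, using $C_\kappa(I_p)\le p^{|\kappa|}$ (it is one non-negative summand of $\sum_{|\kappa|=k}C_\kappa(I_p)=p^{k}$) and $(p/2)_\kappa\ge 2^{-|\kappa|}$; this accounts for the extra powers of $2$ and of $p$ that distinguish~\eqref{eq_Psi_m_bound} from the Bingham bound. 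The second factor is exactly the ratio estimated inside the proof of Theorem~\ref{thm_Phi_m_bound_Bingham}: Lemma~\ref{lemma_ineqs}, which rests on James's integral representation of the zonal polynomial together with a Cauchy--Schwarz-type inequality for zonal polynomials, bounds $C_\kappa(\tfrac14 B'B)/C_\kappa(I_d)$ in terms of $\gamma_1$, the Frobenius norm $\|\tfrac14 B'B\|$, and $(d/2)_\kappa$, and the hypothesis $\|B\|\le 2\gamma_0^{1/2}d^{r/4}$ gives $\|\tfrac14 B'B\|\le\tfrac14\|B\|^{2}\le\gamma_0 d^{r/2}$.

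Assembling the two factors, summing over the partitions of $k$ via $\sum_{|\kappa|=k}C_\kappa(Y)=(\tr Y)^{k}$, and then invoking Lemma~\ref{lemma_factorial_ineq} — the step that converts the outer $1/k!$, the count of partitions of $k$, and the residual factorial factors into $\alpha_p\,(k!)^{-1/2}$ — yields the infinite-series bound~\eqref{eq_Psi_m_bound}. The passage from there to the closed form~\eqref{eq_Psi_m_boundw} is the same Stirling-type estimate for $\sum_{k\ge m}x^{k}/(k!)^{1/2}$ as in the proof of Theorem~\ref{thm_Phi_m_bound_Bingham}, and $\Psi_{d,p;m}(B)=O\bigl(d^{-(3-r)m/2}\bigr)$ is read off from the $k=m$ term of~\eqref{eq_Psi_m_bound}.

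I expect the delicate point to be the bookkeeping of the powers of $d$ and $p$. In the Bingham case $C_\kappa(I_d)$ supplies a factor of order $(d/2)^{|\kappa|}$, only the square root of which is spent against $\|\Sigma\|^{|\kappa|}$; here one must combine the square-root saving from Lemma~\ref{lemma_ineqs}, a lower bound for $(d/2)_\kappa$ valid uniformly for all $d\ge p$ (in particular for $d$ close to $p$, where the bases $(d-i+1)/2$ are small), and the estimate $C_\kappa(I_p)/(p/2)_\kappa\le(2p)^{|\kappa|}$, so that the net per-$|\kappa|$ power works out to $d^{-(3-r)/2}$ with the stated power $p^{5/2}$. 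Everything else — the non-negativity, the closed-form reduction, and the $O$-statement — is then formal.
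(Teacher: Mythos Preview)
Your approach diverges from the paper's at the critical step and, as written, produces a weaker power of $d$ than the theorem claims. By invoking the identity $C_\kappa(I_d)/(d/2)_\kappa=C_\kappa(I_p)/(p/2)_\kappa$ you trade away the denominator $(d/2)_\kappa$ entirely; your bound $1/(p/2)_\kappa\le 2^{|\kappa|}$ then carries no decay in $d$, and the only $d$-decay left comes from Proposition~\ref{prop_zonal_bound} together with~\eqref{eq_d_kappa_ratio_ineq3}, namely $d^{-|\kappa|/2}$. After inserting $\|\tfrac14 B'B\|\le\gamma_0 d^{r/2}$ this yields $\bigl[2\gamma_0\gamma_1 p^{3/2} d^{-(1-r)/2}\bigr]^k$ inside the series, not $\bigl[2\gamma_0\gamma_1 p^{5/2} d^{-(3-r)/2}\bigr]^k$. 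Your final paragraph does list ``a lower bound for $(d/2)_\kappa$'' among the needed ingredients, but in your decomposition $(d/2)_\kappa$ has already been eliminated, so there is no place left to spend that bound; the outline is internally inconsistent on exactly the point that matters.

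The paper's argument is organized differently: it does \emph{not} pass to $(p/2)_\kappa$. It applies Proposition~\ref{prop_zonal_bound} to the $p\times p$ matrix $\Lambda=\tfrac14 B'B$, writing $C_\kappa(\Lambda)\le\bigl(\prod_i\frac{(d^{1/2}/2)_{p\kappa_i}}{(d/2)_{p\kappa_i}}\bigr)^{1/p}C_\kappa(I_p)\|\Lambda\|^{|\kappa|}$, and then bounds the \emph{still-present} factor $1/(d/2)_\kappa$ directly via the lower bound in~\eqref{eq_d_kappa_ineq}, which contributes an extra $(2p)^{|\kappa|}d^{-|\kappa|}$. Combined with the $d^{-|\kappa|/2}$ from~\eqref{eq_d_kappa_ratio_ineq3} this is $d^{-3|\kappa|/2}$, and after $\|\Lambda\|\le\gamma_0 d^{r/2}$ and $\sum_{|\kappa|=k}C_\kappa(I_p)=p^k$ one reads off the exponent $-(3-r)/2$ and the power $p^{5/2}$. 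Two smaller corrections to your write-up: the bound on $C_\kappa(\tfrac14 B'B)/C_\kappa(I_d)$ is Proposition~\ref{prop_zonal_bound}, not Lemma~\ref{lemma_ineqs}; and Lemma~\ref{lemma_factorial_ineq} is not invoked as a separate ``count-of-partitions'' step here---it is already built into~\eqref{eq_d_kappa_ratio_ineq3}, while the sum over partitions is handled by keeping $C_\kappa(I_p)$ intact and using $\sum_{|\kappa|=k}C_\kappa(I_p)=p^k$, which is incompatible with your having already replaced $C_\kappa(I_p)$ by $p^{|\kappa|}$ termwise.
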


We note that each comment in Remark \ref{rem_bound_Bingham} has a direct analog for the matrix Langevin distribution. In particular, the bound in \eqref{eq_Psi_m_bound} is strictly decreasing in $d$ and $m$; strictly increasing in $\gamma_0$ and $r$; and strictly increasing in $p$ for fixed values of all other parameters. Also, the bound in \eqref{eq_Psi_m_boundw} is strictly decreasing in $d$, and it is strictly decreasing in $m$ if $d$ is sufficiently large.

As regards asymptotics for $d, p \to \infty$, by proceeding as in Remark \ref{rem_bound_Bingham}, we deduce that $\Psi_{d,p;m}(B) = O(1)$ if $p^{5/2} d^{-(3-r)/2} = O(1)$, i.e., if $p = O\bigl(d^{(3-r)/5}\bigr)$.

In closing this section, we provide a lower bound for $\Psi_{d,p;m}(B)$.

\begin{Theorem}
\label{thm_Psi_m_lower_bound_Langevin}
Let \smash{$B \in \R^{d \times p}$} such that $B$ is of rank $p$. Let \smash{$\beta_{(1)} \ge \cdots \ge \beta_{(p)}$} denote the ordered eigenvalues of $\tfrac14 B'B$, $N_p = \tfrac12(p+2)(p-1)$, and
\smash{$\mu = \bigl(\frac{1+m}{2+m}\bigr)^{N_p} 2 p \beta_{(p)} {\rm e}^{-1}$}.
Then for all $m \ge 1$,
\begin{equation}
\label{eq_Psi_m_lower_bound_Langevin}
\Psi_{d,p;m}(B) \ge \frac{(1+m)^{-N_p} (d+m)^{-m} (2p\beta_{(p)})^m}{m!} {}_1F_2(1;m+1,d+1+m;\mu).
\end{equation}
\end{Theorem}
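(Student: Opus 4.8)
The plan is to bound $\Psi_{d,p;m}(B)$ from below directly in the zonal--polynomial series \eqref{eq_Psi_m_Langevin}, reducing the inner sum over partitions to a single scalar power series that can be identified as a ${}_1F_2$. First I would note that, because $B$ has rank $p$, the matrix $\tfrac14 B'B$ is positive definite, so $\beta_{(p)}>0$ and every $C_\kappa\bigl(\tfrac14 B'B\bigr)$ is strictly positive; in particular $\Psi_{d,p;m}(B)>0$. The key preliminary reduction is to replace $\tfrac14 B'B$ by a scalar multiple of the identity: since $\tfrac14 B'B-\beta_{(p)}I_p$ is positive semidefinite and zonal polynomials are monotone on the cone of positive semidefinite matrices (being monomial-positive symmetric functions of the eigenvalues), one obtains $C_\kappa\bigl(\tfrac14 B'B\bigr)\ge \beta_{(p)}^{|\kappa|}\,C_\kappa(I_p)$ for every partition $\kappa$ with $\ell(\kappa)\le p$; alternatively this follows from James's integral representation of $C_\kappa$ \cite{James73}, since each leading principal minor of $H'\bigl(\tfrac14 B'B\bigr)H$ is bounded below by the corresponding power of $\beta_{(p)}$ and $\sum_i i(\kappa_i-\kappa_{i+1})=|\kappa|$. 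Combining this with the normalisation identity $\sum_{|\kappa|=k}C_\kappa(I_p)=(\tr I_p)^k=p^k$, I would get, for each $k\ge m$,
\[
\sum_{|\kappa|=k}\frac{C_\kappa\bigl(\tfrac14 B'B\bigr)}{(d/2)_\kappa}\ \ge\ \beta_{(p)}^{\,k}\sum_{|\kappa|=k}\frac{C_\kappa(I_p)}{(d/2)_\kappa}\ \ge\ \frac{(p\beta_{(p)})^k}{\displaystyle\max_{|\kappa|=k,\ \ell(\kappa)\le p}(d/2)_\kappa}.
\]

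Next I would prove a uniform upper bound for $(d/2)_\kappa$ over all partitions of weight $k$ with at most $p$ parts. Using $(a)_\kappa=\prod_i\bigl(a-\tfrac{i-1}{2}\bigr)_{\kappa_i}$, the bound $\bigl(a-\tfrac{i-1}{2}\bigr)_{\kappa_i}\le (a)_{\kappa_i}$ together with the elementary inequality $(a)_{r}(a)_{s}\le (a)_{r+s}$ gives $(d/2)_\kappa\le (d/2)_{(k)}=(d/2)_k$, that is, $2^{-k}\,d(d+2)(d+4)\cdots(d+2k-2)$. I would then split this product into its first $m$ factors and its remaining $k-m$ factors. For the first block, AM--GM yields $d(d+2)\cdots(d+2m-2)\le (d+m-1)^m\le (d+m)^m$; for the second block, a factor-by-factor comparison using only the slack ${\rm e}>2$ yields
\[
(d+2m)(d+2m+2)\cdots(d+2k-2)\ \le\ {\rm e}^{\,k-m}(d+m+1)(d+m+2)\cdots(d+k)\ =\ {\rm e}^{\,k-m}(d+1+m)_{k-m}.
\]
Inserting these, together with (to match the stated closed form) the harmless bounds $(1+m)^{N_p}\ge1$ and $\bigl((2+m)/(1+m)\bigr)^{(k-m)N_p}\ge1$, where $N_p=\tfrac12(p+2)(p-1)$, I would obtain for every $k\ge m$
\[
\sum_{|\kappa|=k}\frac{C_\kappa\bigl(\tfrac14 B'B\bigr)}{(d/2)_\kappa}\ \ge\ (1+m)^{-N_p}(d+m)^{-m}(2p\beta_{(p)})^k\,\Bigl(\tfrac{1+m}{2+m}\Bigr)^{(k-m)N_p}{\rm e}^{-(k-m)}\,\frac{1}{(d+1+m)_{k-m}}.
\]

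It remains to divide by $k!$, sum over $k\ge m$, and recognise the result in closed form: setting $k=m+j$ and using $k!=m!\,(m+1)_j$, $(d+1+m)_{k-m}=(d+1+m)_j$ and $(1)_j=j!$, the right-hand side sums to
\[
\frac{(1+m)^{-N_p}(d+m)^{-m}(2p\beta_{(p)})^m}{m!}\sum_{j=0}^{\infty}\frac{(1)_j}{(m+1)_j\,(d+1+m)_j}\,\frac{\mu^{j}}{j!}
=\frac{(1+m)^{-N_p}(d+m)^{-m}(2p\beta_{(p)})^m}{m!}\,{}_1F_2(1;m+1,d+1+m;\mu),
\]
with $\mu=\bigl(\tfrac{1+m}{2+m}\bigr)^{N_p}2p\beta_{(p)}{\rm e}^{-1}$, which is precisely \eqref{eq_Psi_m_lower_bound_Langevin}. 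This argument is the exact analog of the proof of Theorem \ref{thm_Phi_m_lower_bound_Bingham}, with the role of the ambient dimension $d$ there taken here by $p$.

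The step I expect to be the main obstacle is the uniform upper bound on $(d/2)_\kappa$ together with the two elementary but delicate inequalities that follow it: one must split $d(d+2)\cdots(d+2k-2)$ into exactly $m$ leading factors (handled by AM--GM) and $k-m$ trailing factors (handled by the factor-by-factor ${\rm e}>2$ estimate) in just the form that makes the tail series collapse to a ${}_1F_2$, and it is this bookkeeping that dictates the shifted parameter $d+1+m$ and the precise argument $\mu$. By contrast, the positivity claim, the reduction to $\beta_{(p)}I_p$, the identity $\sum_{|\kappa|=k}C_\kappa(I_p)=p^k$, and the final hypergeometric manipulation are all routine.
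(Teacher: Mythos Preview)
Your argument is correct, but it is not the paper's argument. The paper lower-bounds $C_\kappa\bigl(\tfrac14 B'B\bigr)$ via Proposition~\ref{prop_zonal_lower_bound} (the Faraut--Kor\'anyi bound), which is what produces the factor $(1+k)^{-N_p}$ organically; it then uses the cruder $(d/2)_\kappa\le 2^{-k}(d+k)^k$ from Lemma~\ref{lemma_ineqs} and applies the telescoping method to the resulting one-variable series, exactly paralleling its proof of Theorem~\ref{thm_Phi_m_lower_bound_Bingham}. You instead use the elementary monotonicity $C_\kappa\bigl(\tfrac14 B'B\bigr)\ge \beta_{(p)}^{|\kappa|}C_\kappa(I_p)$ (valid here because the matrix is $p\times p$, so the smallest eigenvalue already equals $\beta_{(p)}$) together with the sharper $(d/2)_\kappa\le (d/2)_k$, and you split $(d/2)_k$ directly rather than telescope. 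Both routes are sound; yours is more elementary and in fact strictly stronger at every intermediate step, which is why you must insert the ``harmless'' factors $(1+m)^{-N_p}$ and $\bigl(\tfrac{1+m}{2+m}\bigr)^{(k-m)N_p}$ by hand to recover the exact form of~\eqref{eq_Psi_m_lower_bound_Langevin}. In other words, your proof shows that the prefactor $(1+m)^{-N_p}$ and the damping of $\mu$ by $\bigl(\tfrac{1+m}{2+m}\bigr)^{N_p}$ in the statement are artefacts of the paper's use of Proposition~\ref{prop_zonal_lower_bound} and could be dropped. One small correction: your closing remark that this is ``the exact analog of the proof of Theorem~\ref{thm_Phi_m_lower_bound_Bingham}'' is not quite right---the paper's proof of that theorem does rely on Proposition~\ref{prop_zonal_lower_bound} (with $N_d$), precisely because there one needs $\sigma_{(p)}$ rather than the cruder $\sigma_{(d)}$ that plain monotonicity would give; your monotonicity shortcut is special to the Langevin case where the argument of $C_\kappa$ is already $p\times p$.
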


In proving this result, we will also deduce the weaker bound,
\[
\Psi_{d,p;m}(B) \ge \frac{(1+m)^{-N_p} (d+m)^{-m} (2p\beta_{(p)})^m}{m!},
\]
a bound that is still noteworthy as it is a single-term lower bound, it is in closed-form, and it converges to $0$ as $d \to \infty$ or $m \to \infty$.

\section{Implications for applications}
\label{sec_implications}

The bounds derived in Sections \ref{sec_matrix_bingham} and \ref{sec_matrix_langevin} for the remainder terms in the corresponding asymptotic expansions appear to be the first such explicit bounds available in the literature. Therefore, we remark on the possible ramifications of those bounds for a variety of applications derived from a survey of the literature on applications of the matrix Bingham and matrix Langevin distributions.

The literature on such applications contains a broad range of reported values of $d$ and $p$. The case in which $(d,p) = (3,2)$ dates to analyses of data, drawn from astronomy and vectorcardiography, as provided in the classic article of Jupp and Mardia \cite{JuppMardia}. The same value, $(d,p) = (3,2)$ still appears in the recent literature, e.g., in data representing the orientation of two-dimensional elliptical paths of objects in near-Earth orbit (Jauch, Hoff and Dunson~\cite{JauchHoffDunson}) and various graph data sets (Mantoux et al.~\cite{Mantoux_etal}).

For larger values of $d$ and $p$, a variety of applications have appeared in the literature. Hoff~\cite{Hoff} applied the matrix Bingham distributions with $(d,p) = (270,2)$ to analyze data on protein-protein interaction networks.

Mantoux et al.~\cite{Mantoux_etal} provided illustrative examples with low- and high-dimensional synthetic data for $(d,p) = (3,2)$, $(20,5)$, $(20,10)$, and $(40,20)$; and with $d = 21$ and $p \in \{2,5,10\}$, they also analyzed resting-state functional magnetic resonance imaging (rs-fMRI) data from the UK Biobank repository of large-scale brain-imaging data set on tens of thousands of subjects (Sudlow et al.~\cite{Sudlow_etal}).

Holbrook, Vandenberg-Rodes and Shahbaba \cite{Holbrook_etal} considered Stiefel manifold data with values of $d$ ranging from $18$ to $53$. Their work included a study of data, collected during a memory experiment on rodents, in which neural spike activity was recorded in the hippocampi of six rats. Especially interesting is the analysis in \cite{Holbrook_etal} of the $53$-dimensional data on Super~Chris, a~particular rat whose neural spike data exhibited statistical features substantially distinct from the neural spike data of his fellow rats; cf.\ Granados-Garcia et al.~\cite{GranadosGarcia}.

Jauch et al.~\cite{JauchHoffDunson} analyzed with $(d,p) = (365,3)$ a meteorological data that included average daily temperatures for a collection of Canadian weather stations; for the case in which $p \ge 2$, this value of $d$ was the largest that we encountered in the literature.

In summary, for $p \ge 2$, we found applications in the literature with values of $(d,p)$ where $d$ ranges between $3$ and $365$, and $p$ ranges between $2$ and $20$. Further, for $p=1$, values of $d$ as large as $62,501$ were noted in \cite[Section~4]{BagyanRichards}.

The computation of the bounds in Theorems \ref{thm_Phi_m_bound_Bingham} and \ref{thm_Psi_m_bound_Langevin} is straightforward as it reduces to calculating the function $R_m(t)$, or the upper bound on $R_m(t)$, in \eqref{eq_R_m} and \eqref{eq_R_m_bound2}, respectively. As we noted earlier, these bounds are (strictly) decreasing in $d$ (and in $m$ if $t \le 1$), increasing in~$\gamma_0$ and $r$, and increasing in $p$. We reiterate that, in such calculations, the relationship between~$t$ and $d$ is that
\[
t = \begin{cases}
\gamma_0 \gamma_1 p^{1/2} (\tr A_+) d^{-(1-r)/2} & (\hbox{in Theorem \ref{thm_Phi_m_bound_Bingham}}), \\
2 \gamma_0 \gamma_1 p^{5/2} d^{-(3-r)/2} & (\hbox{in Theorem \ref{thm_Psi_m_bound_Langevin}}),
\end{cases}
\]
equivalently,
\[
d = \begin{cases}
\bigl(\gamma_0 \gamma_1 p^{1/2} (\tr A_+)/t\bigr)^{2/(1-r)} & (\hbox{in Theorem \ref{thm_Phi_m_bound_Bingham}}),\\
\bigl(2 \gamma_0 \gamma_1 p^{5/2}/t\bigr)^{2/(3-r)} & (\hbox{in Theorem \ref{thm_Psi_m_bound_Langevin}}).
\end{cases}
\]
For small values of $d$, e.g., $d=3$, the resulting values of $t$ usually are large and the bounds also can be large, indicating that larger choices for $m$ may be needed. For large values of $d$, e.g., $d=365$, the resulting values of $t$ typically are small, and the resulting bounds in terms of $R_m(t)$ quickly become negligible even for small values of $m$.

In summary, we expect that the results obtained here will find broad usage as they enable for the first time theoretical analyses and numerical calculations of bounds on the accuracy of the resulting expansions. In particular, these results enable calculation of the given expansions to any desired practical degree of accuracy.

\section{Proofs}\label{sec_proofs}

\subsection{Properties of the zonal polynomials}\label{sec_zonal_polys}

James \cite{James73} obtained for the zonal polynomials $C_\kappa(\Sigma)$ a crucial integral formula, as we now explain. Let \smash{$\Or(d) = \bigl\{H \in \R^{d \times d}\mid H'H = I_d\bigr\}$} be the group of orthogonal matrices in \smash{$\R^{d \times d}$}. For~${H \in \Or(d)}$, denote by $\dd H$ the Haar measure on $\Or(d)$, normalized to have total volume $1$. Also let ${\det}_j(\Sigma)$ be the $j$th principal minor of $\Sigma$, $j=1,\ldots,d$. For each partition $\kappa = (\kappa_1,\ldots,\kappa_d)$,
\begin{gather}
\label{eq_zonal}
C_\kappa(\Sigma) = C_\kappa(I_d) \int_{\Or(d)} \prod_{j=1}^{\ell(\kappa)} \bigl[{\det}_j\bigl(H' \Sigma H\bigr)\bigr]^{\kappa_j-\kappa_{j+1}} \dd H,
\end{gather}
where $\kappa_{d+1} \equiv 0$ and
\begin{equation}
\label{eq_zonal_identity}
C_\kappa(I_d) = 2^{2 |\kappa|} |\kappa|! (d/2)_\kappa \frac{\prod\limits_{1 \le i < j \le \ell(\kappa)} (2\kappa_i - 2\kappa_j - i + j)}{\prod\limits_{i=1}^{\ell(\kappa)} (2\kappa_i + \ell(\kappa) - i)!}.
\end{equation}
The formula \eqref{eq_zonal} was first given by James \cite[equation~(10.4)]{James73}, further applications of it were given in \cite{Kushner} and \cite{Richards85}, and the formula was extended to a more general setting in \cite[Theorem~4.8]{GrossRichards}.

By \eqref{eq_zonal}, $C_\kappa(\Sigma)$ is homogeneous of degree $|\kappa|$; moreover $C_\kappa(\Sigma)$ depends only on the $\ell(\kappa)$ non-zero parts of $\kappa$, so that $C_\kappa(\Sigma)$ is unchanged if zeros are appended to $\kappa$. Since any symmetric matrix can be diagonalized by an orthogonal transformation, then it also follows from \eqref{eq_zonal} that~$C_\kappa(\Sigma)$ is a symmetric function of the eigenvalues of $\Sigma$.

The importance of the normalization \eqref{eq_zonal_identity} is that the zonal polynomials satisfy the identity,
\begin{equation}
\label{eq_trace_zonal}
(\tr \Sigma)^k = \sum_{|\kappa|=k} C_\kappa(\Sigma),
\end{equation}
for all $k=0,1,2,\ldots$; see \cite[p.~479, equation~(21)]{James64}, \cite[p.~228, equation~(iii)]{Muirhead}, \cite[equation~(35.4.6)]{Richards10}. Also noteworthy is the special case of \eqref{eq_zonal_identity} in which $\kappa = (k)$, a partition with one part
\begin{equation}
\label{eq_zonal_identity_1part}
C_{(k)}(I_d) = \frac{(d/2)_k}{(1/2)_k}.
\end{equation}

The (seemingly formidable) expression for $C_\kappa(I_d)$ in \eqref{eq_zonal_identity} arises from the representation theory of $\mathfrak{S}_k$, the group of permutations on $k$ symbols, and we comment on the connection as follows. It is a classical result that the collection of irreducible representations of $\mathfrak{S}_k$ is in one-to-one correspondence with the set of partitions of $k$ (see Ledermann \cite[Chapter 4]{Ledermann}). For a~given partition $\kappa = (\kappa_1,\ldots,\kappa_d)$ we use the standard notation $\chi_{[2\kappa]}(1)$ for the dimension of the irreducible representation, corresponding to the partition $2\kappa = (2\kappa_1,\ldots,2\kappa_d)$, of the symmetric group $\mathfrak{S}_{2|\kappa|}$ on $2|\kappa|$ symbols. James \cite[pp.~478--479, equations~(18) and~(20)]{James64} proved that
\begin{equation}
\label{eq_zonal_identity_rep}
C_\kappa(I_d) = \frac{2^{2|\kappa|} |\kappa|! (d/2)_\kappa}{(2|\kappa|)!} \chi_{[2\kappa]}(1) = \frac{(d/2)_\kappa}{(1/2)_{|\kappa|}} \chi_{[2\kappa]}(1),
\end{equation}
and then \eqref{eq_zonal_identity} is derived from \eqref{eq_zonal_identity_rep} by applying Frobenius' famous formula for $\chi_{[2\kappa]}(1)$ (Ledermann \cite[p.~123]{Ledermann}).

Another consequence of \eqref{eq_zonal_identity_rep} is that
\begin{equation}
\label{eq_chi_kappa}
\frac{C_\kappa(I_d)}{(d/2)_\kappa} = \frac{\chi_{[2\kappa]}(1)}{(1/2)_{|\kappa|}}.
\end{equation}
Since the dimension of an irreducible representation does not change if zeros are appended to the corresponding partition, then it follows that, for any partition $\kappa$ such that $\ell(\kappa) \le p$, the right-hand side of \eqref{eq_chi_kappa} remains unchanged if $d$ is replaced by $p$; hence
\begin{equation}
\label{eq_kappa_invar}
\frac{C_\kappa(I_d)}{(d/2)_\kappa} = \frac{C_\kappa(I_p)}{(p/2)_\kappa}.
\end{equation}
We remark that the identity \eqref{eq_kappa_invar} can be verified directly, albeit laboriously, from \eqref{eq_zonal_identity}. Further, \eqref{eq_kappa_invar} has been noted explicitly in the literature, e.g., by Chikuse \cite[p.~30]{Chikuse} and by Edelman et al. \cite[p.~259]{Edelman}. Also, if $\ell(\kappa) = 1$, then \eqref{eq_kappa_invar} reduces to \eqref{eq_zonal_identity_1part}.

The zonal polynomials also have a combinatorial formulation. Recall that the set of all partitions is endowed with the \textit{lexicographic ordering}: For partitions $\kappa = (\kappa_1,\ldots,\kappa_d)$ and ${\lambda = (\lambda_1,\ldots,\lambda_d)}$ such that $|\kappa| = |\lambda|$, we say that $\lambda$ \textit{is less than} $\kappa$, written $\lambda < \kappa$, if there exists $i \in \{1,\ldots,d-1\}$ such that $(\lambda_1,\ldots,\lambda_{i-1}) = (\kappa_1,\ldots,\kappa_{i-1})$ and $\lambda_i < \kappa_i$.

For \smash{$\Sigma \in \R^{d \times d}_\sym$}, denote by $\sigma_1,\ldots,\sigma_d$ the eigenvalues of $\Sigma$. For each partition $\kappa = (\kappa_1,\ldots,\kappa_\ell)$ of length $\ell$, the {\it monomial symmetric function of $\Sigma$ corresponding to $\kappa$} is
\begin{equation}
\label{eq_msf}
M_\kappa(\Sigma) = \sum_{(j_1,\ldots,j_\ell)} \sigma_{j_1}^{\kappa_1} \cdots \sigma_{j_\ell}^{\kappa_\ell},
\end{equation}
where the sum is taken over all distinct permutations $(j_1,\ldots,j_\ell)$ of $\ell$ integers drawn from the set $\{1,\ldots,d\}$. James \cite{James68} (see also Muirhead \cite[Section 7.2]{Muirhead}) proved that
\begin{equation}
\label{eq_zonal_msf}
C_\kappa(\Sigma) = \sum_{\lambda \le \kappa} c_{\kappa,\lambda} M_\lambda(\Sigma),
\end{equation}
where the constants $c_{\kappa,\lambda}$ are \textit{nonnegative} and the summation is over all partitions $\lambda$ such that~${\lambda \le \kappa}$, i.e., $\lambda$ is less than or equal to $\kappa$ in the lexicographic ordering.

\subsection[Inequalities for the partitional shifted factorials and the zonal polynomials]{Inequalities for the partitional shifted factorials\\ and the zonal polynomials}
\label{sec_ineqs}

\begin{Lemma}
\label{lemma_factorial_ineq}
Let $a_1,\ldots,a_p$ be nonnegative integers such that $a_1+\cdots+a_p\ge 2$, and define
\begin{equation}
\label{eq_alpha_p}
\alpha_p = (2\pi)^{-(p-1)/4p} p^{1/4p}.
\end{equation}
Then
\begin{equation}
\label{eq_prod_factorial_bound}
\prod_{i=1}^p (p a_i)! \le \alpha_p^{2p} p^{p(a_1+\cdots+a_p)} \bigl[(a_1+\cdots+a_p)!\bigr]^p.
\end{equation}
\end{Lemma}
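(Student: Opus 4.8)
The plan is to reduce the multivariable inequality \eqref{eq_prod_factorial_bound} to a one-variable statement via a convexity/superadditivity argument, and then to handle the one-variable statement using Stirling's formula. First I would observe that both sides of \eqref{eq_prod_factorial_bound} are products over $i=1,\ldots,p$ of functions of the single integer $a_i$, except that the right-hand side couples the $a_i$ through the common factor $\bigl[(a_1+\cdots+a_p)!\bigr]^p$. Writing $n = a_1+\cdots+a_p$, I would divide through and recast the claim as
\[
\prod_{i=1}^p \frac{(pa_i)!}{p^{pa_i}} \le \alpha_p^{2p} \, (n!)^p .
\]
The natural route is to bound each factor $(pa_i)!\big/p^{pa_i}$ from above in a way that, after multiplying, produces $(n!)^p$ up to the constant $\alpha_p^{2p}$. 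Since $n! = \prod_i$(something) is not literally true, the key is instead to prove the \emph{logarithmic superadditivity} of an auxiliary function: if $f(a) = \log\bigl[(pa)!\,/\,p^{pa}\bigr]$, then I want $\sum_i f(a_i) \le p\log(n!) + 2p\log\alpha_p$, which would follow from a suitable comparison between $\sum_i f(a_i)$ and $f$ evaluated at, or summed against, the total $n$.

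The cleanest implementation, I expect, is via Stirling: write $(pa_i)! \sim \sqrt{2\pi p a_i}\,(pa_i/{\rm e})^{pa_i}$ and $a_i! \sim \sqrt{2\pi a_i}\,(a_i/{\rm e})^{a_i}$, so that $(pa_i)!\big/\bigl(p^{pa_i}(a_i!)^p\bigr)$ is, up to lower-order factors, $\sqrt{2\pi p a_i}\big/\bigl(2\pi a_i\bigr)^{p/2}$, a quantity that is $O(a_i^{(1-p)/2})$ and hence \emph{decreasing} in $a_i$ for $p\ge 2$ (and equal to $1$ when $p=1$). Combined with the elementary multinomial bound $\prod_i a_i! \le n!$ (valid since $\binom{n}{a_1,\ldots,a_p}\ge 1$), this suggests proving the inequality in the sharp form with the constant $\alpha_p^{2p}$ extracted precisely from the worst case. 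I would make this rigorous by using explicit two-sided Stirling bounds of the form $\sqrt{2\pi m}\,(m/{\rm e})^m \le m! \le \sqrt{2\pi m}\,(m/{\rm e})^m {\rm e}^{1/(12m)}$, so that the ratio $(pa_i)!\big/\bigl(p^{pa_i}(a_i!)^p\bigr)$ is bounded above by a constant times $a_i^{(1-p)/2}$ times a correction factor that is $\le 1$ for the exponential terms; the constant that emerges is exactly $p^{1/2}(2\pi)^{(1-p)/2} = \alpha_p^{2p}\cdot$(a benign factor), and one then checks that the definition $\alpha_p = (2\pi)^{-(p-1)/4p}p^{1/4p}$ in \eqref{eq_alpha_p} is calibrated precisely so that $\alpha_p^{2p} = p^{1/2}(2\pi)^{-(p-1)/2}$ absorbs it. Finally I would combine $\prod_i \bigl[a_i^{(1-p)/2}\bigr]\le 1$ (for $p\ge 2$, since each $a_i\ge 0$ and at least one is positive, with the convention that a zero part contributes a factor $1$) with $\prod_i a_i! \le n!$ to conclude.

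The main obstacle I anticipate is \emph{bookkeeping the constants sharply enough}. A crude application of Stirling gives \eqref{eq_prod_factorial_bound} with some unspecified $p$-dependent constant, but the theorem asserts the specific constant $\alpha_p^{2p} = p^{1/2}(2\pi)^{-(p-1)/2}$, so I must track the $\sqrt{2\pi m}$ factors and the ${\rm e}^{1/(12m)}$ corrections carefully, and verify that the error terms genuinely point in the favorable direction — i.e., that the ${\rm e}^{1/(12pa_i)}$ overestimate of $(pa_i)!$ is dominated by the $p$ copies of the lower Stirling bound for $a_i!$, which requires a small inequality like ${\rm e}^{1/(12pa_i)} \le {\rm e}^{p/(12a_i)}$, trivially true. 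A secondary subtlety is the edge behavior when some $a_i = 0$ (where the factor $(pa_i)! = 1$ and $a_i^{(1-p)/2}$ is the indeterminate $0^{(1-p)/2}$): this is handled by simply dropping zero parts at the outset, i.e., reducing to the case where all parts are $\ge 1$, which is legitimate since removing a zero part multiplies the left side by $1$ and the right side by $p^0\cdot 1 = 1$ as well. Once those points are pinned down, the rest is a direct computation.
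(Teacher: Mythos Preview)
Your Stirling-based route is different from the paper's argument and can be made to work, but your constant-tracking has a gap you should close explicitly. The paper avoids Stirling entirely: it first applies the multinomial inequality to the \emph{scaled} parts $pa_1,\ldots,pa_p$ (which sum to $pn$ with $n=a_1+\cdots+a_p$), giving $\prod_i (pa_i)! \le (pn)!$, and then bounds $(pn)!$ in one stroke via Gauss' multiplication formula $\Gamma(pn+1)=(2\pi)^{-(p-1)/2}p^{pn+1/2}\prod_{j=0}^{p-1}\Gamma\bigl(n+(j+1)/p\bigr)$, using monotonicity of $\Gamma$ on $[2,\infty)$ to replace each factor by $\Gamma(n+1)=n!$. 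The hypothesis $n\ge 2$ is used exactly here, and the constant $\alpha_p^{2p}=p^{1/2}(2\pi)^{-(p-1)/2}$ falls out with no error terms to manage.

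Your per-factor bound $(pa_i)!\big/\bigl(p^{pa_i}(a_i!)^p\bigr)\lesssim \alpha_p^{2p}\,a_i^{(1-p)/2}$ is correct (once you use the Robbins form $m!\ge\sqrt{2\pi m}(m/e)^m e^{1/(12m+1)}$ for the lower Stirling bound, so that the correction actually absorbs). But when you multiply over the $q$ indices with $a_i\ge 1$, the constant you produce is $(\alpha_p^{2p})^q$, not $\alpha_p^{2p}$; combining with $\prod_i a_i^{(1-p)/2}\le 1$ and $\prod_i a_i!\le n!$ then gives $\prod_i (pa_i)!\le (\alpha_p^{2p})^q\,p^{pn}(n!)^p$. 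To reach the stated lemma you still need the one-line observation $\alpha_p^{2p}=p^{1/2}(2\pi)^{-(p-1)/2}\le 1$ (equivalently $p\le(2\pi)^{p-1}$), so that $(\alpha_p^{2p})^q\le\alpha_p^{2p}$ for $q\ge 1$. With that added, your argument is complete and in fact yields a slightly sharper constant; the trade-off is the Stirling bookkeeping (correction factors, the $a_i=1$ edge case) versus the paper's single clean identity.
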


\begin{proof}
Since the multinomial coefficient, $(a_1+\cdots+a_p)/a_1! \cdots a_p!$, is a positive integer, then
\begin{equation}
\label{eq_multinomial}
\prod_{i=1}^p a_i! \le (a_1 + \cdots + a_p)!.
\end{equation}
It is well known that the classical gamma function is increasing on the interval $[2,\infty)$; therefore,
\smash{$\Gamma\bigl(a+(i-1)p^{-1}\bigr) \le \Gamma\bigl(a+(p-1)p^{-1}\bigr) = \Gamma\bigl(a + 1 - p^{-1}\bigr)$}
for $a \ge 2$ and all $i=1,\ldots,p$. Applying Gauss' multiplication formula for the gamma function, we obtain
\begin{align}
\Gamma(pa) &= (2\pi)^{-(p-1)/2} p^{pa-(1/2)} \prod_{i=1}^p \Gamma\bigl(a + (i-1)p^{-1}\bigr) \nonumber \\
&\le (2\pi)^{-(p-1)/2} p^{pa-(1/2)} \bigl[\Gamma\bigl(a + 1 - p^{-1}\bigr)\bigr]^p,\label{eq_Gauss_mult_1}
\end{align}
where the latter inequality holds for $a \ge 2$.

Setting $a = a_1+\cdots+a_p + p^{-1}$ in \eqref{eq_Gauss_mult_1}, we obtain
\begin{align}
\label{eq_Gauss_mult_2}
(p (a_1+\cdots+a_p))! &= \Gamma(p (a_1+\cdots+a_p) + 1) \nonumber \\
&\le (2\pi)^{-(p-1)/2} p^{p (a_1+\cdots+a_p) + (1/2)} [\Gamma(a_1+\cdots+a_p+1)]^p \nonumber \\
&= (2\pi)^{-(p-1)/2} p^{p (a_1+\cdots+a_p) + (1/2)} [(a_1+\cdots+a_p)!]^p.
\end{align}
Applying \eqref{eq_Gauss_mult_2} to \eqref{eq_multinomial}, we find that
\begin{align*}
\prod_{i=1}^p (p a_i)! &\le \bigl(p (a_1+\cdots+a_p)\bigr)! \le (2\pi)^{-(p-1)/2} p^{p (a_1+\cdots+a_p) + (1/2)} \bigl((a_1+\cdots+a_p)!\bigr)^p \\
&\equiv \alpha_p^{2p} p^{p(a_1+\cdots+a_p)} [(a_1+\cdots+a_p)!]^p.
\end{align*}
This completes the proof.
\end{proof}

\begin{Lemma}
\label{lemma_ineqs}
Let $d \ge p$ and let $\kappa = (\kappa_1,\ldots,\kappa_p)$ be a partition of length $\ell(\kappa) \le p$ and weight~${|\kappa| \ge 2}$. Then
\begin{equation}
\label{eq_d_kappa_ineq}
(2p)^{-|\kappa|} d^{|\kappa|} \le (d/2)_\kappa \le 2^{-|\kappa|} (d + |\kappa|)^{|\kappa|}.
\end{equation}
Also, with $\gamma_1 = \bigl(\sqrt{3}+1\bigr)/2 \simeq 1.366025$, we have
\begin{equation}
\label{eq_d_kappa_ratio_ineq3}
\prod_{i=1}^p \frac{\bigl(d^{1/2}/2\bigr)_{p \kappa_i}}{(d/2)_{p \kappa_i}} \le \bigl[\alpha_p (|\kappa|!)^{1/2} \gamma_1^{|\kappa|} p^{|\kappa|/2} d^{-|\kappa|/2}\bigr]^p.
\end{equation}
\end{Lemma}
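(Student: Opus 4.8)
The plan is to prove the two displayed inequalities separately; both reduce to elementary estimates on the products of consecutive factors making up a shifted factorial, starting from the factorization $(d/2)_\kappa=\prod_{i=1}^{\ell(\kappa)}\bigl(\tfrac12(d-i+1)\bigr)_{\kappa_i}$ coming from \eqref{eq_partit_fact} (trailing zero parts contribute $1$). Since $i\le\ell(\kappa)\le p\le d$, every factor here is positive. For the upper half of \eqref{eq_d_kappa_ineq} I would apply the arithmetic--geometric mean inequality to the $i$th block: its geometric mean is at most its arithmetic mean $\tfrac12(d-i+1)+\tfrac12(\kappa_i-1)=\tfrac12(d+\kappa_i-i)\le\tfrac12(d+|\kappa|)$ (using $-i\le 0$ and $\kappa_i\le|\kappa|$); raising to the power $\kappa_i$ and multiplying over $i$ gives $(d/2)_\kappa\le\bigl(\tfrac12(d+|\kappa|)\bigr)^{|\kappa|}$. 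For the lower half, each factor in the $i$th block is at least its first term $\tfrac12(d-i+1)\ge\tfrac12(d-p+1)\ge d/(2p)$, the last step being equivalent to $(p-1)(d-p)\ge 0$; so the $i$th block is $\ge(d/(2p))^{\kappa_i}$, and multiplying over $i$ gives $(d/2)_\kappa\ge(d/(2p))^{|\kappa|}$.

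The substance of the lemma is \eqref{eq_d_kappa_ratio_ineq3}, which I would reduce to a one-index estimate: for every integer $n\ge 0$,
\[
\frac{\bigl(d^{1/2}/2\bigr)_n}{(d/2)_n}\le (n!)^{1/2}\gamma_1^{n}d^{-n/2}.
\]
Granting this, set $n=p\kappa_i$ and multiply over $i=1,\dots,p$ (a zero part giving the trivial case $n=0$), obtaining $\prod_{i=1}^p\bigl(d^{1/2}/2\bigr)_{p\kappa_i}/(d/2)_{p\kappa_i}\le\Bigl(\prod_{i=1}^p(p\kappa_i)!\Bigr)^{1/2}\gamma_1^{p|\kappa|}d^{-p|\kappa|/2}$. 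Now apply Lemma~\ref{lemma_factorial_ineq} with $a_i=\kappa_i$, which is legitimate since $a_1+\cdots+a_p=|\kappa|\ge 2$; after taking square roots it gives $\bigl(\prod_i(p\kappa_i)!\bigr)^{1/2}\le\alpha_p^{p}p^{p|\kappa|/2}(|\kappa|!)^{p/2}$. Substituting and collecting powers reproduces $\bigl[\alpha_p(|\kappa|!)^{1/2}\gamma_1^{|\kappa|}p^{|\kappa|/2}d^{-|\kappa|/2}\bigr]^p$ exactly.

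The remaining step, the one-index estimate, is where I expect the real work to be, and I would prove it by induction on $n$. The case $n=0$ is $1\le 1$. For the inductive step, multiply the bound for $n$ by $\bigl(d^{1/2}/2+n\bigr)/(d/2+n)$; it then suffices to show
\[
\frac{d^{1/2}/2+n}{d/2+n}\le (n+1)^{1/2}\gamma_1\,d^{-1/2}.
\]
Clearing denominators and writing $t=d^{1/2}$, this becomes $\bigl((n+1)^{1/2}\gamma_1-1\bigr)t^2-2nt+2n(n+1)^{1/2}\gamma_1\ge 0$. The leading coefficient is positive (as $\gamma_1>1$), so it is enough that the discriminant be nonpositive, i.e.\ $n\le 2(n+1)\gamma_1^2-2(n+1)^{1/2}\gamma_1$ for $n\ge 1$ (the case $n=0$ being immediate). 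This is precisely where the value $\gamma_1=\bigl(\sqrt{3}+1\bigr)/2$ is used: it is the positive root of $2x^2-2x-1=0$, so $2\gamma_1^2=2\gamma_1+1$, and hence $2(n+1)\gamma_1^2-2(n+1)^{1/2}\gamma_1=2\gamma_1\bigl((n+1)-(n+1)^{1/2}\bigr)+(n+1)\ge n+1>n$, since $n+1\ge(n+1)^{1/2}$.

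The delicate point throughout is that each inductive step must gain only $(n+1)^{1/2}$, not $(n+1)$: a per-step factor $(n+1)$ would be far easier to produce (for instance by crudely bounding the numerator), but it would leave $|\kappa|!$ in place of the needed $(|\kappa|!)^{1/2}$, which is too large for the subsequent estimates. Extracting the square-root power forces the near-optimal comparison of $d^{1/2}/2+n$ with $d/2+n$, and with it both the quadratic computation and the particular constant $\gamma_1$.
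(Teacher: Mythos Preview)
Your proof is correct and follows essentially the same route as the paper's: the lower bound in \eqref{eq_d_kappa_ineq} by bounding each factor below by $d/(2p)$, the upper bound via AM--GM (you apply it block-by-block, the paper applies it once to all $|\kappa|$ factors and then simplifies the resulting sum---both arrive at $\tfrac12(d+|\kappa|)$), and \eqref{eq_d_kappa_ratio_ineq3} by first establishing the one-index bound $\bigl(d^{1/2}/2\bigr)_n/(d/2)_n\le (n!)^{1/2}\gamma_1^{n}d^{-n/2}$, setting $n=p\kappa_i$, and then invoking Lemma~\ref{lemma_factorial_ineq}. The only substantive difference is that the paper imports the one-index inequality from \cite[Lemma~A.2]{BagyanRichards} (in the slightly sharper form $[(r-1)!]^{1/2}\gamma_1^{r-1}d^{-r/2}$, then weakens it), whereas you supply a self-contained inductive proof; your discriminant computation using $2\gamma_1^{2}=2\gamma_1+1$ is exactly the mechanism behind that cited lemma, so nothing is lost.
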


\begin{proof}
By \eqref{eq_partit_fact}, we have
\begin{equation}
\label{eq_d_kappa_ineq_2}
(d/2)_\kappa = \prod_{j=1}^p \left(\frac12 d - \frac12(j-1)\right)_{\kappa_j}
= \prod_{j=1}^p \prod_{i=1}^{\kappa_j} \left(\frac12d-\frac12(j-1)+i-1\right).
\end{equation}
For all $i \ge 1$, $j \le p$, and $d \ge p$, we also have
\begin{align*}
\frac12d-\frac12(j-1)+i-1 &\ge \frac12d-\frac12(p-1) = \frac12 \left(1-\frac{p-1}{d}\right) d
\ge \frac12 \left(1-\frac{p-1}{p}\right) d = (2p)^{-1}d.
\end{align*}
Therefore, by \eqref{eq_d_kappa_ineq_2},
\[
(d/2)_\kappa \ge \prod_{j=1}^p \prod_{i=1}^{\kappa_j} \bigl[(2p)^{-1} d\bigr] = (2p)^{|\kappa|} d^{-|\kappa|}.
\]
This establishes the lower bound in \eqref{eq_d_kappa_ineq}.

Next, we apply to the product in \eqref{eq_d_kappa_ineq_2} the arithmetic-geometric mean inequality. Then we obtain
\begin{align*}
(d/2)_\kappa &\le \Bigg(\frac{1}{|\kappa|} \sum_{j=1}^p \sum_{i=1}^{\kappa_j} \left(\frac12d-\frac12(j-1)+i-1\right)\Bigg)^{|\kappa|} \\
&= \Bigg(\frac{1}{2 |\kappa|} \Bigg(d |\kappa| - \sum_{j=1}^p j \kappa_j + \sum_{j=1}^p \kappa_j^2\Bigg)\Bigg)^{|\kappa|} \le \Bigg(\frac{1}{2 |\kappa|} \Bigg(d |\kappa| + \Bigg(\sum_{j=1}^p \kappa_j\Bigg)^2\Bigg)\Bigg)^{|\kappa|} \\
&= 2^{-|\kappa|} (d + |\kappa|)^{|\kappa|}.
\end{align*}
This proves the upper bound in \eqref{eq_d_kappa_ineq}.

Further, it is known from \cite[Lemma A.2]{BagyanRichards} that, for all $r \ge 1$,
\smash{$\frac{(d^{1/2}/2)_r}{(d/2)_r} \le [(r-1)!]^{1/2} \gamma_1^{r-1} d^{-r/2}$},
and since $\gamma_1 > 1$, then we also have
\[
\frac{\bigl(d^{1/2}/2\bigr)_r}{(d/2)_r} \le [(r-1)!]^{1/2} \gamma_1^{r-1} d^{-r/2} \cdot \gamma_1 r^{1/2} = (r!)^{1/2} \gamma_1^r d^{-r/2}.
\]
Since the latter inequality is valid for $r = 0$, then we have shown that, for all $r \ge 0$,
\begin{equation}
\label{eq_d_kappa_ratio_ineq2}
\frac{\bigl(d^{1/2}/2\bigr)_r}{(d/2)_r} \le (r!)^{1/2} \gamma_1^r d^{-r/2}.
\end{equation}

Now we apply \eqref{eq_d_kappa_ratio_ineq2} to obtain
\begin{equation}
\label{eq_d_kappa_ratio_ineq}
\prod_{i=1}^p \frac{\bigl(d^{1/2}/2\bigr)_{p \kappa_i}}{(d/2)_{p \kappa_i}} \le \prod_{i=1}^p [(p \kappa_i)!]^{1/2} \gamma_1^{p \kappa_i} d^{-p \kappa_i/2}
= \gamma_1^{p |\kappa|} d^{-p |\kappa|/2} \prod_{i=1}^p [(p \kappa_i)!]^{1/2}.
\end{equation}
By \eqref{eq_prod_factorial_bound} in Lemma \ref{lemma_factorial_ineq} with $a_i = \kappa_i$, $i=1,\ldots,p$, we also have
\[
\prod_{i=1}^p (p \kappa_i)! \le \alpha_p^{2p} p^{p |\kappa|} (|\kappa|!)^p,
\]
and by substituting this bound into \eqref{eq_d_kappa_ratio_ineq}, we obtain \eqref{eq_d_kappa_ratio_ineq3}.
\end{proof}

For \smash{$\Sigma \in \R^{d \times d}_\sym$}, let $\sigma_1,\ldots,\sigma_d$ denote the eigenvalues of $\Sigma$ and define the diagonal matrix $\Sigma_+ = \diag(|\sigma_1|,\ldots,|\sigma_d|)$. The following result is a consequence of \eqref{eq_zonal_msf}, the combinatorial formula for the zonal polynomials.

\begin{Lemma}
\label{lem_zonal_psd}
Let $\kappa$ be a partition and \smash{$\Sigma \in \R^{d \times d}_\sym$}. Then $|C_\kappa(\Sigma)| \le C_\kappa(\Sigma_+)$.
\end{Lemma}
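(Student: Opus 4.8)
The plan is to prove the bound $|C_\kappa(\Sigma)| \le C_\kappa(\Sigma_+)$ by exploiting the combinatorial expansion \eqref{eq_zonal_msf}, which writes $C_\kappa(\Sigma) = \sum_{\lambda \le \kappa} c_{\kappa,\lambda} M_\lambda(\Sigma)$ with \emph{nonnegative} coefficients $c_{\kappa,\lambda}$. Since the coefficients are nonnegative, the triangle inequality immediately gives
\[
|C_\kappa(\Sigma)| \le \sum_{\lambda \le \kappa} c_{\kappa,\lambda} |M_\lambda(\Sigma)|,
\]
so it suffices to prove the termwise inequality $|M_\lambda(\Sigma)| \le M_\lambda(\Sigma_+)$ for every partition $\lambda$.

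For that termwise bound I would go back to the definition \eqref{eq_msf}: writing $\lambda = (\lambda_1,\ldots,\lambda_\ell)$ and letting $\sigma_1,\ldots,\sigma_d$ be the eigenvalues of $\Sigma$,
\[
M_\lambda(\Sigma) = \sum_{(j_1,\ldots,j_\ell)} \sigma_{j_1}^{\lambda_1} \cdots \sigma_{j_\ell}^{\lambda_\ell},
\]
the sum running over all distinct permutations of $\ell$ distinct indices from $\{1,\ldots,d\}$. Applying the triangle inequality to this finite sum and then using $|\sigma_{j_i}^{\lambda_i}| = |\sigma_{j_i}|^{\lambda_i}$, we get
\[
|M_\lambda(\Sigma)| \le \sum_{(j_1,\ldots,j_\ell)} |\sigma_{j_1}|^{\lambda_1} \cdots |\sigma_{j_\ell}|^{\lambda_\ell} = M_\lambda(\Sigma_+),
\]
since $\Sigma_+$ has eigenvalues $|\sigma_1|,\ldots,|\sigma_d|$ by definition. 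Combining this with the displayed bound from the previous paragraph gives
\[
|C_\kappa(\Sigma)| \le \sum_{\lambda \le \kappa} c_{\kappa,\lambda} M_\lambda(\Sigma_+) = C_\kappa(\Sigma_+),
\]
where the final equality is again \eqref{eq_zonal_msf}, now applied to $\Sigma_+$.

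The argument is short and involves only two applications of the triangle inequality, so there is no serious obstacle. The one point that deserves care is that the expansion \eqref{eq_zonal_msf} is genuinely an identity in the eigenvalues (i.e.\ $M_\lambda$ and $C_\kappa$ are symmetric functions of eigenvalues, as recorded earlier from \eqref{eq_zonal}), so that substituting the eigenvalue vector $(|\sigma_1|,\ldots,|\sigma_d|)$ of $\Sigma_+$ into the right-hand side is legitimate and reproduces $C_\kappa(\Sigma_+)$; and that the nonnegativity of the $c_{\kappa,\lambda}$ — the essential input, due to James \cite{James68} — is exactly what makes the triangle inequality collapse back into $C_\kappa(\Sigma_+)$ rather than merely a sum of absolute values. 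I would also note in passing that when $\Sigma$ is positive semidefinite one has $\Sigma_+ = \Sigma$ and the inequality is an equality, which is a useful sanity check.
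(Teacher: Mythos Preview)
Your proof is correct and essentially identical to the paper's own argument: both invoke the expansion \eqref{eq_zonal_msf} with nonnegative coefficients $c_{\kappa,\lambda}$, apply the triangle inequality first to the sum over $\lambda$ and then to each monomial symmetric function $M_\lambda$, and reassemble to obtain $C_\kappa(\Sigma_+)$. One small quibble with your closing sanity check: for positive semidefinite $\Sigma$ you do not literally have $\Sigma_+ = \Sigma$ (since $\Sigma_+$ is diagonal by definition while $\Sigma$ need not be), but rather $C_\kappa(\Sigma_+) = C_\kappa(\Sigma)$ because the zonal polynomial depends only on the eigenvalues---so the conclusion of your sanity check is still correct.
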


\begin{proof}
By \eqref{eq_zonal_msf}, the nonnegativity of the constants $c_{\kappa,\lambda}$, and the triangle inequality, we have
\[
|C_\kappa(\Sigma)| \le \boldsymbol{\bigg|}\sum_{\lambda \le \kappa} c_{\kappa,\lambda} M_\lambda(\Sigma)\boldsymbol{\bigg|}
\le \sum_{\lambda \le \kappa} c_{\kappa,\lambda} |M_\lambda(\Sigma)|.
\]
By \eqref{eq_msf}, for all partitions $\lambda$,
\[
|M_\lambda(\Sigma)| = \boldsymbol{\Big|}\sum \sigma_{j_1}^{\lambda_1} \cdots \sigma_{j_\ell}^{\lambda_\ell}\boldsymbol{\Big|}
\le \sum |\sigma_{j_1}|^{\lambda_1} \cdots |\sigma_{j_\ell}|^{\lambda_\ell}
= M_\lambda(\Sigma_+).
\]
Therefore, \[
|C_\kappa(\Sigma)| \le \sum_{\lambda \le \kappa} c_{\kappa,\lambda} M_\lambda(\Sigma_+) =
C_\kappa(\Sigma_+).
\]
The proof now is complete.
\end{proof}

The following result will be crucial for deriving bounds on the tails of the asymptotic series for the normalizing constants of the matrix Bingham and matrix Langevin distributions.

\begin{Proposition}
\label{prop_zonal_bound}
Let $\kappa$ be a partition such that $\ell(\kappa) \le p$, where $p \le d$, and let \smash{$\Sigma \in \R^{d \times d}_\sym$}. Then
\begin{gather}
\label{eq_zonal_bound}
|C_\kappa(\Sigma)| \le \Bigg(\prod_{i=1}^p \frac{\bigl(d^{1/2}/2\bigr)_{p \kappa_i}}{(d/2)_{p \kappa_i}}\Bigg)^{1/p} C_\kappa(I_d) \|\Sigma\|^{|\kappa|}.
\end{gather}
\end{Proposition}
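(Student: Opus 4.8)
The plan is to bound $|C_\kappa(\Sigma)|$ by $C_\kappa(\Sigma_+)$ using Lemma~\ref{lem_zonal_psd}, and then to estimate $C_\kappa(\Sigma_+)$ in terms of $C_\kappa(I_d)$ and the Frobenius norm via James's integral representation \eqref{eq_zonal}. First I would write, for the diagonal nonnegative matrix $\Sigma_+$,
\[
C_\kappa(\Sigma_+) = C_\kappa(I_d) \int_{\Or(d)} \prod_{j=1}^{\ell(\kappa)} \bigl[{\det}_j\bigl(H' \Sigma_+ H\bigr)\bigr]^{\kappa_j-\kappa_{j+1}} \dd H,
\]
so the task reduces to bounding the integrand uniformly in $H \in \Or(d)$. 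For fixed $H$, each principal minor ${\det}_j(H'\Sigma_+ H)$ is the product of the eigenvalues of a $j \times j$ principal submatrix of $H'\Sigma_+ H$; since $\Sigma_+ \succeq 0$, that submatrix is positive semidefinite, and by the arithmetic--geometric mean inequality its determinant is at most $(j^{-1}\tr(\cdot))^j$. Bounding the trace of the submatrix by $\tr(H'\Sigma_+ H) = \tr\Sigma_+ = \sum_i |\sigma_i|$ and then by $\sqrt{d}\,\|\Sigma\|$ (Cauchy--Schwarz, since $\|\Sigma_+\| = \|\Sigma\|$) will give ${\det}_j(H'\Sigma_+ H) \le (j^{-1}\sqrt{d}\,\|\Sigma\|)^j$, but I expect a cleaner route is to bound each minor by $\|\Sigma\|^j$ directly via Hadamard-type reasoning on the PSD submatrix, or to simply bound $\prod_j [{\det}_j]^{\kappa_j - \kappa_{j+1}} \le \|\Sigma\|^{|\kappa|}$ after collecting exponents, since $\sum_j j(\kappa_j - \kappa_{j+1}) = |\kappa|$.

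The key step is therefore the identity $\sum_{j=1}^{\ell(\kappa)} j\,(\kappa_j - \kappa_{j+1}) = \sum_{j=1}^{\ell(\kappa)} \kappa_j = |\kappa|$ (Abel summation, using $\kappa_{\ell(\kappa)+1} = 0$), which converts a uniform per-minor bound of the form ${\det}_j(H'\Sigma_+ H) \le f(\|\Sigma\|)^j$ into $\prod_j [{\det}_j]^{\kappa_j-\kappa_{j+1}} \le f(\|\Sigma\|)^{|\kappa|}$. Since the Haar measure has total mass $1$, integrating then yields $C_\kappa(\Sigma_+) \le C_\kappa(I_d)\, f(\|\Sigma\|)^{|\kappa|}$. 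To get the factor $\bigl(\prod_{i=1}^p (d^{1/2}/2)_{p\kappa_i}/(d/2)_{p\kappa_i}\bigr)^{1/p}$ in \eqref{eq_zonal_bound}, I would split the eigenvalue bound: write $\tr\Sigma_+ \le \sqrt{d}\,\|\Sigma\|$, absorbing one factor of $\|\Sigma\|$ per degree into $\|\Sigma\|^{|\kappa|}$ and the factors of $\sqrt{d}$ (together with the AM--GM denominators) into a power of $d$; then recognize that power of $d$, after comparison with $(d/2)_\kappa$ and $C_\kappa(I_d)$, as precisely $\bigl(\prod_i (d^{1/2}/2)_{p\kappa_i}/(d/2)_{p\kappa_i}\bigr)^{1/p}$ by invoking the lower bound in \eqref{eq_d_kappa_ineq} and the ratio estimate \eqref{eq_d_kappa_ratio_ineq2} from Lemma~\ref{lemma_ineqs} in reverse. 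In other words, the numerical prefactor in \eqref{eq_zonal_bound} is exactly the slack between the crude $d^{|\kappa|/2}$-type bound on $\prod_j [{\det}_j]^{\kappa_j-\kappa_{j+1}}$ and the sharper factorial expression, and tracking that slack is the bookkeeping heart of the argument.

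The main obstacle will be the precise matching of the $d$-dependent constant: showing that the product of AM--GM denominators $\prod_j j^{-j(\kappa_j-\kappa_{j+1})}$ together with the $d^{|\kappa|/2}$ from the trace bound, once divided by the natural normalization, collapses to the stated ratio of partitional shifted factorials raised to the $1/p$. I anticipate this is handled by relating $\prod_{i=1}^p (d/2)_{p\kappa_i}$ to $C_\kappa(I_d)$ via \eqref{eq_zonal_identity} (or more directly by the one-part identity \eqref{eq_zonal_identity_1part} applied with $d^{1/2}$ in place of $d$, giving $C_{(p\kappa_i)}(I_{d^{1/2}}) = (d^{1/2}/2)_{p\kappa_i}/(1/2)_{p\kappa_i}$), so that the ratio $\prod_i (d^{1/2}/2)_{p\kappa_i}/(d/2)_{p\kappa_i}$ emerges as exactly the quantity controlling the discrepancy between evaluating the relevant zonal-type expression at $I_d$ versus at a rank-one perturbation of size $\|\Sigma\|$. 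Once that identification is in place, raising to the $1/p$ and multiplying by $C_\kappa(I_d)\|\Sigma\|^{|\kappa|}$ gives \eqref{eq_zonal_bound} directly; everything else is the routine AM--GM and Abel-summation steps sketched above.
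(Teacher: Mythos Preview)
Your plan has a genuine gap: any bound on the integrand that is \emph{uniform in $H$} is too crude to recover the prefactor $\bigl(\prod_{i=1}^p (d^{1/2}/2)_{p\kappa_i}/(d/2)_{p\kappa_i}\bigr)^{1/p}$. That prefactor is at most~$1$ (each factor $(d^{1/2}/2+j)/(d/2+j)$ is $\le 1$) and is asymptotic to $d^{-|\kappa|/2}$ as $d\to\infty$, so \eqref{eq_zonal_bound} is strictly stronger than the estimate $C_\kappa(\Sigma)\le C_\kappa(I_d)\|\Sigma\|^{|\kappa|}$. Your ``cleaner'' route, bounding ${\det}_j(H'\Sigma_+H)\le \|\Sigma\|^j$ pointwise, gives exactly that weaker estimate and stops there. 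Your AM--GM route is worse: bounding the submatrix trace by $\tr\Sigma_+\le \sqrt{d}\,\|\Sigma\|$ yields a bound of order $d^{|\kappa|/2}\|\Sigma\|^{|\kappa|}$, off from the target by a factor $d^{|\kappa|}$. There is no way to repair this by ``invoking Lemma~\ref{lemma_ineqs} in reverse'': that lemma only gives \emph{upper} bounds on the ratio $(d^{1/2}/2)_r/(d/2)_r$, and in any case the constant $\prod_j j^{-j(\kappa_j-\kappa_{j+1})}$ left over from AM--GM does not depend on $d$, so it cannot cancel the spurious $d^{|\kappa|/2}$.

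The missing idea is that the small prefactor comes from \emph{averaging over the Haar measure}, not from a pointwise bound. The paper's argument applies Hadamard's inequality to get $\prod_j[{\det}_j(H'\Sigma_+H)]^{\kappa_j-\kappa_{j+1}}\le\prod_{i=1}^p[(H'\Sigma_+H)_{ii}]^{\kappa_i}$, then uses H\"older's inequality on the integral over $\Or(d)$ to separate the factors, and finally uses Haar invariance to observe that $(H'\Sigma_+H)_{ii}\eqdist(H'\Sigma_+H)_{11}$. Each resulting integral $\int_{\Or(d)}[(H'\Sigma_+H)_{11}]^{p\kappa_i}\,\dd H$ is exactly $C_{(p\kappa_i)}(\Sigma_+)/C_{(p\kappa_i)}(I_d)$ by the one-part case of \eqref{eq_zonal}, and the known one-part bound $C_{(r)}(\Sigma)\le \frac{(d^{1/2}/2)_r}{(1/2)_r}\|\Sigma\|^r$ from \cite[equation~(A.10)]{BagyanRichards}, combined with \eqref{eq_zonal_identity_1part}, is what produces the ratio $(d^{1/2}/2)_{p\kappa_i}/(d/2)_{p\kappa_i}$. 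In short, the prefactor records the moments of $(H'\Sigma_+H)_{11}$ under Haar measure, not slack in a pointwise determinant bound; you need H\"older plus the reduction to one-part zonal polynomials, and your sketch contains neither.
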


\begin{proof}
By Lemma \ref{lem_zonal_psd}, $|C_\kappa(\Sigma)| \le C_\kappa(\Sigma_+)$. Since $\|\Sigma\| = \|\Sigma_+\|$, then it suffices to assume, without loss of generality, that $\Sigma = \Sigma_+$, i.e., that $\Sigma$ is positive semidefinite and diagonal, with diagonal entries $\sigma_1,\ldots,\sigma_d$.

Since $\Sigma$ is positive semidefinite, then so is every principal submatrix of $H'\Sigma H$, for all ${H \in \Or(d)}$. Therefore, ${\det}_j\bigl(H'\Sigma H\bigr) \ge 0$ for all $j=1,\ldots,d$.

For $\ell(\kappa) \le p$, it follows from \eqref{eq_zonal} that
\begin{equation}
\label{eq_zonal_ineq1}
C_\kappa(\Sigma) = C_\kappa(I_d) \int_{\Or(d)} \prod_{j=1}^p [{\det}_j\bigl(H'\Sigma H\bigr)]^{\kappa_j-\kappa_{j+1}} \dd H.
\end{equation}
Denote by $\bigl(H'\Sigma H\bigr)_{ii}$ the $i$th diagonal entry of $H'\Sigma H$, $i=1,\ldots,d$. By Hadamard's inequality for the principal minors of a positive semidefinite matrix \cite[p.~505]{Horn},
\[
{\det}_j\bigl(H'\Sigma H\bigr) \le \prod_{i=1}^j \bigl(H'\Sigma H\bigr)_{ii}
\]
for all $j=1,\ldots,d$; therefore,
\begin{align*}
\prod_{j=1}^p \bigl[{\det}_j\bigl(H'\Sigma H\bigr)\bigr]^{\kappa_j-\kappa_{j+1}} &\le \prod_{j=1}^p \prod_{i=1}^j \bigl[\bigl(H'\Sigma H\bigr)_{ii}\bigr]^{\kappa_j-\kappa_{j+1}} \\
&= \prod_{i=1}^p \prod_{j=i}^p \bigl[\bigl(H'\Sigma H\bigr)_{ii}\bigr]^{\kappa_j-\kappa_{j+1}}
= \prod_{i=1}^p \bigl[\bigl(H'\Sigma H\bigr)_{ii}\bigr]^{\kappa_i}.
\end{align*}
Inserting this bound into \eqref{eq_zonal_ineq1} and then applying H\"older's inequality, we obtain
\begin{align}
C_\kappa(\Sigma) &\le C_\kappa(I_d) \int_{\Or(d)} \prod_{i=1}^p \bigl[\bigl(H'\Sigma H\bigr)_{ii}\bigr]^{\kappa_i} \dd H \nonumber \\
&\le C_\kappa(I_d) \Bigg(\prod_{i=1}^p \int_{\Or(d)} \bigl[\bigl(H'\Sigma H\bigr)_{ii}\bigr]^{p \kappa_i} \dd H\Bigg)^{1/p}.\label{eq_zonal_Holder}
\end{align}

Denote by $h_{i,j}$ the $(i,j)$th entry of $H$, then $h_i = (h_{1,i},\ldots,h_{d,i})'$ is the $i$th column of $H$, $i=1,\ldots,d$. Since $\Sigma = \diag(\sigma_1,\ldots,\sigma_d)$, then for all $i=1,\ldots,d$,
\[
\bigl(H'\Sigma H\bigr)_{ii} = \sum_{l=1}^d h_{l,i}^2 \sigma_l = h_i'\Sigma h_i.
\]
Now regard $H$ as a random matrix having the uniform distribution (Haar measure) on $Or(d)$. Since the Haar measure is orthogonally invariant, and therefore invariant under permutation of the columns of $H$, it follows that the marginal distribution of $h_i$ does not depend on $i$. Denoting equality in distribution by \smash{$\eqdist$}, we obtain
\[
\bigl(H'\Sigma H\bigr)_{ii} = h_i'\Sigma h_i \eqdist h_1'\Sigma h_1 = \bigl(H'\Sigma H\bigr)_{11},
\]
for all $i=1,\ldots,d$. Consequently,
\begin{equation}
\label{eq_zonal_length1}
\int_{\Or(d)} \bigl[\bigl(H'\Sigma H\bigr)_{ii}\bigr]^{p \kappa_i} \dd H = \int_{\Or(d)} \bigl[\bigl(H'\Sigma H\bigr)_{11}\bigr]^{p \kappa_i} \dd H = \frac{C_{(p \kappa_i)}(\Sigma)}{C_{(p \kappa_i)}(I_d)},
\end{equation}
the second equality following from \eqref{eq_zonal}. Applying \eqref{eq_zonal_length1} to \eqref{eq_zonal_Holder}, we obtain
\begin{equation}
\label{eq_zonal_ineq2}
C_\kappa(\Sigma) \le C_\kappa(I_d) \Bigg(\prod_{i=1}^p \frac{C_{(p \kappa_i)}(\Sigma)}{C_{(p \kappa_i)}(I_d)}\Bigg)^{1/p}.
\end{equation}

By \cite[equation~(A.10)]{BagyanRichards},
\[
C_{(p \kappa_i)}(\Sigma) \le \frac{(d^{1/2}/2)_{p \kappa_i}}{(1/2)_{p \kappa_i}} \|\Sigma\|^{p \kappa_i};
\]
also, by \eqref{eq_zonal_identity_1part},
\[
C_{(p \kappa_i)}(I_d) = \frac{(d/2)_{p \kappa_i}}{(1/2)_{p \kappa_i}}.
\]
Substituting the two latter results into \eqref{eq_zonal_ineq2}, we obtain
\begin{align*}
C_\kappa(\Sigma) &\le C_\kappa(I_d) \Bigg(\prod_{i=1}^p \frac{\bigl(d^{1/2}/2\bigr)_{p \kappa_i}}{(1/2)_{p \kappa_i}} \|\Sigma\|^{p \kappa_i} \cdot \frac{(1/2)_{p \kappa_i}}{(d/2)_{p \kappa_i}}\Bigg)^{1/p} \\
&= C_\kappa(I_d) \|\Sigma\|^{|\kappa|} \Bigg(\prod_{i=1}^p \frac{\bigl(d^{1/2}/2\bigr)_{p \kappa_i}}{(d/2)_{p \kappa_i}}\Bigg)^{1/p},
\end{align*}
and this establishes \eqref{eq_zonal_bound}.
\end{proof}

\begin{Remark}
\label{rem_zonal_bound}
Suppose we evaluate both sides of the inequality \eqref{eq_zonal_bound} at $\Sigma = c I_d$, $c > 0$. Since~\smash{$C_\kappa(c I_d) = c^{|\kappa|} C_\kappa(I_d)$} and \smash{$\|c I_d\|^{|\kappa|} = c^{|\kappa|} d^{|\kappa|/2}$}, then the inequality reduces to
\begin{equation}
\label{eq_zonal_bound_BR}
1 \le \Bigg(\prod_{i=1}^p \frac{\bigl(d^{1/2}/2\bigr)_{p \kappa_i}}{(d/2)_{p \kappa_i}}\Bigg)^{1/p} d^{|\kappa|/2}.
\end{equation}
For $d \to \infty$, we have
\[
\frac{(d^{1/2}/2)_{p \kappa_i}}{(d/2)_{p \kappa_i}} \sim \frac{(d^{1/2}/2)^{p \kappa_i}}{(d/2)^{p \kappa_i}} = d^{-p \kappa_i/2},
\]
therefore
\[
\Bigg(\prod_{i=1}^p \frac{\bigl(d^{1/2}/2\bigr)_{p \kappa_i}}{(d/2)_{p \kappa_i}}\Bigg)^{1/p} d^{|\kappa|/2} \sim \Bigg(\prod_{i=1}^p d^{-p \kappa_i/2}\Bigg)^{1/p} d^{|\kappa|/2} = 1.
\]
Hence for scalar matrices $\Sigma$, both sides of the inequality \eqref{eq_zonal_bound} are asymptotically equal as ${d \to \infty}$. This indicates that, in high dimensions, the inequality is close to asymptotically tight if $\Sigma$ is within a small neighborhood of any scalar matrix.
\end{Remark}

\begin{Remark}
\label{rem_zonal_bound_2}
{\rm In the setting of general symmetric cones, another inequality for the zonal (spherical) polynomials was given by Faraut and Kor\'anyi \cite[Theorem XII.1.1]{FarautKoranyi} (see also the proof of~Lemma 6.5 in \cite{GrossRichards}, where the same inequality can be derived for the matrix cones). In the notation used here, the inequality in \cite[loc.~cit.]{FarautKoranyi} states that if $\kappa = (\kappa_1,\ldots,\kappa_p)$ is a partition of length $p$, $\Sigma$ is a $d \times d$ positive definite matrix, and $\sigma_{(1)} \ge \cdots \ge \sigma_{(d)}$ are the ordered eigenvalues of $\Sigma$, then
\begin{equation}
\label{eq_zonal_bound_FK}
C_\kappa(\Sigma) \le C_\kappa(I_d) \prod_{j=1}^p \sigma_{(j)}^{\kappa_j}.
\end{equation}

There is also the issue of which of the inequalities \eqref{eq_zonal_bound} and \eqref{eq_zonal_bound_FK} is sharper. We now provide two examples to show that the answer depends on $d$ and on $\Sigma$.

For the case in which $\Sigma = cI_d$ where $c$ does not depend on $d$, we find that \eqref{eq_zonal_bound_FK} is sharper than \eqref{eq_zonal_bound} since, by \eqref{eq_zonal_bound_BR},
\[
\prod_{i=1}^p \frac{\bigl(d^{1/2}/2\bigr)_{p \kappa_i}}{(d/2)_{p \kappa_i}} \ge d^{-p |\kappa|/2}.
\]
More generally, it can be seen that if $\Sigma$ is ``close'' to a scalar matrix and $d$ is small, then we can expect \eqref{eq_zonal_bound_FK} to be sharper than \eqref{eq_zonal_bound}, however both bounds are asymptotic to $d^{-p |\kappa|/2} C_\kappa(I_d)$ for large $d$.

On the other hand, suppose that $\Sigma$ is ``far'' from the scalar matrices $cI_d$ in the sense that $c$ does not depend on $d$ and $\|I_d\|/\|\Sigma\| \to \infty$ as $d \to \infty$. As an example, for constants $\sigma, \rho > 0$ and~${r \in [0,1)}$, let the eigenvalues of $\Sigma$ be
\[
\sigma_j = \begin{cases}
\sigma d^{r/2}, & j=1,\ldots,p, \\
\rho d^{-(d-p+1)/2}, & j=p+1,\ldots,d.
\end{cases}
\]
Then $\Sigma$ is non-singular and
\[
\|\Sigma\| = \Biggl(\sum_{j=1}^d \sigma_j^2\Biggr)^{1/2} = \bigl[p \sigma^2 d^{r} + \rho^2 (d-p) d^{-(d-p+1)}\bigr]^{1/2} \sim p^{1/2} \sigma d^{r/2},
\]
as $d \to \infty$, so $\Sigma$ satisfies the hypotheses of Theorem \ref{thm_Phi_m_bound_Bingham}. Further, $\Sigma$ is far from the scalar matrices $cI_d$ since \smash{$\|I_d\|/\|\Sigma\| \sim p^{-1/2} \sigma^{-1} d^{(1-r)/2} \to \infty$} as $d \to \infty$.

For this choice of $\Sigma$ and for a partition $\kappa$ of length $p$, the ratio of the right-hand sides of~\eqref{eq_zonal_bound} and \eqref{eq_zonal_bound_FK} equals
\[
\Bigg(\prod_{i=1}^p \frac{\bigl(d^{1/2}/2\bigr)_{p \kappa_i}}{(d/2)_{p \kappa_i}}\Bigg)^{1/p} \frac{\|\Sigma\|^{|\kappa|}}{\prod_{j=1}^p \sigma_{(j)}^{\kappa_j}} \sim \frac{\bigl(d^{1/2}/2\bigr)^{|\kappa|}}{(d/2)^{|\kappa|}} \frac{\bigl(p^{1/2} \sigma d^{r/2}\bigr)^{|\kappa|}}{\bigl(\sigma d^{r/2}\bigr)^{|\kappa|}} =
d^{-|\kappa|/2} p^{|\kappa|/2} \to 0
\]
as $d \to \infty$. In such instances, the bound in \eqref{eq_zonal_bound} is sharper than the bound in \eqref{eq_zonal_bound_FK} for sufficiently large $d$, and it becomes increasingly sharper afterwards as $d$ increases.
}\end{Remark}

Next, we obtain a lower bound for the zonal polynomials. This bound tightens an inequality stated in \cite[Theorem XII.1.1\,(ii)]{FarautKoranyi}. (Readers who wish to match the notation used in \cite{FarautKoranyi} with the notation in the present article will find it helpful to review \cite[Section 1.12]{DingGrossRichards}, where the notation used for general irreducible symmetric cones is made explicit in the case of each of the five classes of cones.)

\begin{Proposition}
\label{prop_zonal_lower_bound}
Let $\kappa = (\kappa_1,\ldots,\kappa_p)$ be a partition of length $p$, $\Sigma$ be a $d \times d$ positive definite matrix with ordered eigenvalues $\sigma_{(1)} \ge \cdots \ge \sigma_{(d)}$, and $N_d = \tfrac12(d+2)(d-1)$. Then
\begin{equation}
\label{eq_zonal_ineq_lb}
C_\kappa(\Sigma) \ge (1 + |\kappa|)^{-N_d} C_\kappa(I_d) \sigma_{(1)}^{\kappa_1} \cdots \sigma_{(p)}^{\kappa_p}.
\end{equation}
\end{Proposition}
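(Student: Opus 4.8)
The plan is to mimic the strategy used for the upper bound in Proposition~\ref{prop_zonal_bound}, but now exploiting the integral representation~\eqref{eq_zonal} to produce a lower bound, and to control the Jacobian-type factors by a single worst-case estimate that produces the constant $N_d$. Since $\Sigma$ is positive definite, every principal minor ${\det}_j(H'\Sigma H)$ is strictly positive for all $H \in \Or(d)$, so the integrand in~\eqref{eq_zonal} is positive and we may freely manipulate it. Diagonalizing, assume without loss of generality $\Sigma = \diag(\sigma_{(1)},\ldots,\sigma_{(d)})$ with $\sigma_{(1)} \ge \cdots \ge \sigma_{(d)} > 0$. The key elementary fact is that for a positive definite matrix with ordered eigenvalues $\sigma_{(1)} \ge \cdots \ge \sigma_{(d)}$ one has ${\det}_j(H'\Sigma H) \ge \sigma_{(d-j+1)} \sigma_{(d-j+2)} \cdots \sigma_{(d)}$, i.e.\ the $j$th principal minor is at least the product of the $j$ smallest eigenvalues — this is the companion of Hadamard's inequality and follows, e.g., from the interlacing/Cauchy eigenvalue inequalities applied to the leading $j \times j$ block of $H'\Sigma H$.

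Next I would substitute this lower bound into~\eqref{eq_zonal}: since each exponent $\kappa_j - \kappa_{j+1}$ is nonnegative,
\begin{align*}
C_\kappa(\Sigma) \ge C_\kappa(I_d) \prod_{j=1}^p \bigl(\sigma_{(d-j+1)} \cdots \sigma_{(d)}\bigr)^{\kappa_j - \kappa_{j+1}}
= C_\kappa(I_d) \prod_{i=1}^d \sigma_{(i)}^{\,e_i},
\end{align*}
where the exponents $e_i$ are obtained by collecting powers of each $\sigma_{(i)}$ across the telescoping product; one finds $e_i = 0$ for $i \le d-p$, and for $i = d-p+t$ with $1 \le t \le p$ one gets $e_{d-p+t} = \sum_{j=t}^p (\kappa_j - \kappa_{j+1}) = \kappa_t$. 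Thus the right-hand side is exactly $C_\kappa(I_d) \prod_{t=1}^p \sigma_{(d-p+t)}^{\kappa_t}$, which uses the $p$ \emph{smallest} eigenvalues rather than the $p$ largest. To convert this into the claimed bound~\eqref{eq_zonal_ineq_lb}, which uses $\sigma_{(1)}^{\kappa_1} \cdots \sigma_{(p)}^{\kappa_p}$, I would write
\[
\prod_{t=1}^p \sigma_{(d-p+t)}^{\kappa_t}
= \prod_{t=1}^p \sigma_{(t)}^{\kappa_t} \cdot \prod_{t=1}^p \Bigl(\frac{\sigma_{(d-p+t)}}{\sigma_{(t)}}\Bigr)^{\kappa_t},
\]
and bound the correction factor from below using $\sigma_{(d-p+t)}/\sigma_{(t)} \ge \sigma_{(d)}/\sigma_{(1)}$ together with $\sum_t \kappa_t = |\kappa|$, giving a factor $(\sigma_{(d)}/\sigma_{(1)})^{|\kappa|}$ — but this is not of the stated form. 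So the actual argument must instead compare $C_\kappa(\Sigma)$ against a different normalization: one should apply the same lower-bound reasoning, but after noting that replacing $\Sigma$ by $\sigma_{(1)} I_d$ increases nothing and re-scaling reduces to the case $\sigma_{(1)} = 1$; then bound each ratio $\sigma_{(i)} \ge \sigma_{(d)}$ and count that the total number of factors of the smallest eigenvalue that can appear is at most the number of pairs $1 \le i < j \le d$, i.e.\ $\binom{d}{2}$, refined to $N_d = \tfrac12(d+2)(d-1)$ by a sharper bookkeeping of the exponents $\kappa_t$ versus $|\kappa|$.

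The main obstacle, and the step I expect to require the most care, is producing the precise constant $N_d = \tfrac12(d+2)(d-1)$ rather than a cruder exponent such as $\binom{d}{2}$ or $d|\kappa|$: this demands a careful accounting of how many times the ratio between consecutive (or extreme) eigenvalues is discarded when passing from the telescoping product over principal minors to the product $\sigma_{(1)}^{\kappa_1} \cdots \sigma_{(p)}^{\kappa_p}$, and the bound $(1+|\kappa|)^{-N_d}$ suggests that one does not estimate eigenvalue ratios directly but rather bounds a ratio of the form $C_\kappa(\Sigma)/\bigl(C_\kappa(I_d)\prod \sigma_{(j)}^{\kappa_j}\bigr)$ below by $(1+|\kappa|)^{-N_d}$ via a monotonicity or convexity argument in the eigenvalues — possibly by reducing to the worst case where all but $\sigma_{(1)}$ collapse, where $(d/2)_\kappa$-type factorial ratios of the form appearing in~\eqref{eq_d_kappa_ineq} can be invoked. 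I would first nail down the elementary minor inequality and the telescoping computation, then isolate exactly which quantity must be bounded by $(1+|\kappa|)^{-N_d}$, and finally verify that constant by a direct estimate using $(d/2)_\kappa \le 2^{-|\kappa|}(d+|\kappa|)^{|\kappa|}$ from Lemma~\ref{lemma_ineqs} together with the combinatorial identity $\sum_{1 \le i < j \le d} 1 + (d-1) = N_d$.
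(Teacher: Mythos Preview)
Your approach has a genuine gap that cannot be repaired along the lines you sketch. The integral-representation step is fine: bounding ${\det}_j(H'\Sigma H)$ below by the product of the $j$ smallest eigenvalues and substituting into~\eqref{eq_zonal} does give a lower bound of the form $C_\kappa(I_d)$ times a monomial in the $p$ \emph{smallest} eigenvalues $\sigma_{(d-p+1)},\ldots,\sigma_{(d)}$ (incidentally, your exponent bookkeeping is off: the telescoping actually yields $\sigma_{(d)}^{\kappa_1}\sigma_{(d-1)}^{\kappa_2}\cdots\sigma_{(d-p+1)}^{\kappa_p}$, not $\sigma_{(d-p+t)}^{\kappa_t}$). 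The fatal problem is the conversion to the $p$ \emph{largest} eigenvalues. The correction factor you need is $\prod_t (\sigma_{(d-p+t)}/\sigma_{(t)})^{\kappa_{\cdot}}$, which can be made arbitrarily small by taking $\Sigma$ ill-conditioned (e.g., $\sigma_{(1)}$ large and $\sigma_{(d)}$ tiny). No bound depending only on $d$ and $|\kappa|$---in particular nothing like $(1+|\kappa|)^{-N_d}$---can dominate this ratio uniformly in $\Sigma$. Your final paragraphs acknowledge the mismatch but then propose to recover the constant via eigenvalue-ratio estimates, $(d/2)_\kappa$ bounds from Lemma~\ref{lemma_ineqs}, and the numerology $\binom{d}{2}+(d-1)=N_d$; none of these ingredients removes the dependence on the condition number of $\Sigma$, so the argument cannot close.

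The paper's proof takes an entirely different route: it quotes a lower bound from Faraut--Kor\'anyi \cite[p.~241]{FarautKoranyi} of the form
\[
\frac{C_\kappa(\Sigma)}{C_\kappa(I_d)} \ge \frac{1}{d_\kappa T_2}\,\sigma_{(1)}^{\kappa_1}\cdots\sigma_{(p)}^{\kappa_p},
\]
where $d_\kappa$ is the dimension of a space of homogeneous polynomials of degree $|\kappa|$ on the symmetric cone and $T_2$ is an explicit product of beta-function ratios. The constant $N_d=\tfrac12(d+2)(d-1)$ is exactly $\dim\R^{d\times d}_{\sym}-1$, and enters because $d_\kappa \le \binom{N_d+|\kappa|}{N_d}\le (1+|\kappa|)^{N_d}$; the factor $T_2$ is shown to be at most $1$ by interpreting each beta ratio as the moment $\E(V^{\kappa_i-\kappa_j})$ of a beta random variable $V\in(0,1)$. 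The point is that the Faraut--Kor\'anyi inequality already places the \emph{largest} eigenvalues on the right-hand side, so no condition-number loss occurs; this is a representation-theoretic fact about spherical polynomials on symmetric cones, not something extractable from Hadamard-type minor bounds.
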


\begin{proof}
By \cite[p.~241, line 4]{FarautKoranyi},
\begin{equation}
\label{eq_Phi_m_lb}
\frac{C_\kappa(\Sigma)}{C_\kappa(I_d)} \ge \frac{T_1}{T_2} \sigma_{(1)}^{\kappa_1} \cdots \sigma_{(p)}^{\kappa_p},
\end{equation}
where
\smash{$T_1 = \frac{1}{d_\kappa}$},
with $d_\kappa$ being the dimension of a certain vector space of polynomials that are homogeneous of degree $k = |\kappa|$, and
\begin{equation}
\label{eq_Delta_m_sq}
T_2 = \prod_{1 \le i < j \le p} \frac{B\bigl(\kappa_i - \kappa_j + \frac12 (j-i-1)+1,\frac12\bigr)}{B\bigl(\frac12 (j-i-1)+1,\frac12\bigr)}.
\end{equation}

In \cite[p.~242, line 9]{FarautKoranyi}, it is shown that
\[
d_\kappa \le \binom{N_d+k}{N_d} = \frac{(N_d+k)!}{N_d! k!} = \prod_{j=1}^{N_d} \frac{j+k}{j} \le (1+k)^{N_d}
\]
since $(j+k)/j \le 1+k$, for all $j=1,\ldots,N_d$. Therefore, \begin{equation}
\label{eq_Phi_m_sq}
T_1 = \frac{1}{d_\kappa} \ge (1 + |\kappa|)^{-N_d},
\end{equation}

As regards an upper bound for $T_2$ we now prove, using an elementary probabilistic approach, that
\begin{equation}
\label{eq_Delta_m_sq_ub}
T_2 \le 1.
\end{equation}
For $b, c > 0$, let $V$ denote a random variable having a beta distribution with probability density function
\[
\frac{v^{b-1} (1-v)^{c-1}}{B(b,c)} ,\qquad 0 < v < 1.
\]
 Then for $\alpha \ge 0$,
\[
\frac{B(\alpha+b,c)}{B(b,c)} = \frac{1}{B(b,c)} \int_0^1 v^{\alpha+b-1} (1-v)^{c-1} \dd v = \E\bigl(V^\alpha\bigr),
\]
and it is trivial that $\E\bigl(V^\alpha\bigr) \le 1$ since $\alpha \ge 0$ and $0 < V < 1$, almost surely. Setting $\alpha = \kappa_i - \kappa_j$, $b = \tfrac12 (j-i-1)+1$, and $c = \tfrac12$, we obtain
\begin{equation}
\label{eq_beta_distn_bound}
\frac{B\bigl(\kappa_i - \kappa_j+\frac12 (j-i-1)+1,\frac12\bigr)}{B\bigl(\frac12 (j-i-1)+1,\frac12\bigr)} \le 1
\end{equation}
for all $1 \le i < j \le p$. Applying \eqref{eq_beta_distn_bound} to each term in \eqref{eq_Delta_m_sq}, we obtain \eqref{eq_Delta_m_sq_ub}, and by applying~\eqref{eq_Phi_m_sq} and \eqref{eq_Delta_m_sq_ub} to \eqref{eq_Phi_m_lb}, then we obtain \eqref{eq_zonal_ineq_lb}.
\end{proof}

\subsection[The proofs for Section 2]{The proofs for Section \ref{sec_matrix_bingham}}
\label{sec_proofs_Phi_bounds}

\begin{proof}[Proof of Theorem \ref{thm_Phi_m_bound_Bingham}]
We begin by applying to \eqref{eq_remainder_term} the triangle inequality, the upper bound from Proposition \ref{prop_zonal_bound}, and the inequality $|C_\kappa(A)| \le C_\kappa(A_+)$. Then we obtain
\begin{align}
\label{eq_Phi_m_bound1}
|\Phi_{d,p;m}(A,\Sigma)| &\le \sum_{k=m}^\infty \frac{1}{k!} \sum_{|\kappa|=k} \frac{|C_\kappa(A)|}{C_\kappa(I_d)} \Bigg(\prod_{i=1}^p \frac{\bigl(d^{1/2}/2\bigr)_{p \kappa_i}}{(d/2)_{p \kappa_i}}\Bigg)^{1/p} C_\kappa(I_d) \|\Sigma\|^{|\kappa|} \nonumber \\
&\le \sum_{k=m}^\infty \frac{\|\Sigma\|^k}{k!} \sum_{|\kappa|=k} C_\kappa(A_+) \Bigg(\prod_{i=1}^p \frac{\bigl(d^{1/2}/2\bigr)_{p \kappa_i}}{(d/2)_{p \kappa_i}}\Bigg)^{1/p}.
\end{align}
By \eqref{eq_d_kappa_ratio_ineq3},
\[
\left(\prod_{i=1}^p \frac{\bigl(d^{1/2}/2\bigr)_{p \kappa_i}}{(d/2)_{p \kappa_i}}\right)^{1/p} \le \alpha_p (|\kappa|!)^{1/2} \gamma_1^{|\kappa|} p^{|\kappa|/2} d^{-|\kappa|/2},
\]
and by inserting this bound into \eqref{eq_Phi_m_bound1}, we obtain
\begin{align}
\label{eq_Phi_m_bound2}
\Phi_{d,p;m}(A,\Sigma)| &\le \sum_{k=m}^\infty \frac{\|\Sigma\|^k}{k!} \sum_{|\kappa|=k} C_\kappa(A_+) \alpha_p (|\kappa|!)^{1/2} \gamma_1^{|\kappa|} p^{|\kappa|/2} d^{-|\kappa|/2} \nonumber \\
&= \alpha_p \sum_{k=m}^\infty \frac{\gamma_1^k p^{k/2} d^{-k/2} \|\Sigma\|^k}{(k!)^{1/2}} \sum_{|\kappa|=k} C_\kappa(A_+).
\end{align}

By \eqref{eq_trace_zonal},
$
\sum_{|\kappa|=k} C_\kappa(A_+) = (\tr A_+)^k$,
and, by assumption, $\|\Sigma\| \le \gamma_0 d^{r/2}$. Therefore, we obtain from \eqref{eq_Phi_m_bound2} the inequality
\begin{equation}
\label{eq_Phi_m_bound3}
\Phi_{d,p;m}(A,\Sigma)| \le \alpha_p R_m\bigl(\gamma_0 \gamma_1 p^{1/2} (\tr A_+) d^{-(1-r)/2}\bigr),
\end{equation}
where
\begin{equation}
\label{eq_R_m}
R_m(t) = \sum_{k=m}^\infty \frac{t^k}{(k!)^{1/2}},\qquad t > 0.
\end{equation}
By applying the ratio test, we find that the series \eqref{eq_R_m} converges for all $t$, and therefore~\eqref{eq_Phi_m_bound3} converges for all $d$. This establishes the bound \eqref{eq_Phi_m_bound}.

To obtain a closed-form upper bound for $R_m(t)$ we begin by applying Stirling's well-known inequality for the factorial function \cite[Theorem 1]{Impens}; viz., for all $k \ge 1$,
\[
k! \ge (2\pi)^{1/2} k^{(2k+1)/2} {\rm e}^{-k}.
\]
Inverting Stirling's inequality and applying the result to \eqref{eq_R_m}, we obtain
\begin{equation}
\label{eq_R_m_bound}
R_m(t) \le (2\pi)^{-1/4} \sum_{k=m}^\infty k^{-(2k+1)/4} {\rm e}^{k/2} t^k.
\end{equation}
We now apply the telescoping method to obtain an upper bound for the right-hand side of~\eqref{eq_R_m_bound}. For $j \ge m$, let
\begin{equation}
\label{eq_aj}
a_j = j^{-(2j+1)/4} {\rm e}^{j/2} t^j,
\end{equation}
then we obtain through straightforward algebraic manipulations
\begin{equation}
\label{eq_aj_ratio}
\frac{a_{j+1}}{a_j} = \bigl(1+j^{-1}\bigr)^{-j/2} \frac{j^{1/4}}{(j+1)^{3/4}} {\rm e}^{1/2} t.
\end{equation}
Since the function \smash{$j \mapsto \bigl(1+j^{-1}\bigr)^{-j/2}$}, $j \ge m$, is decreasing, then
\begin{equation}
\label{eq_c_m}
\bigl(1+j^{-1}\bigr)^{-j/2} \le \bigl(1+m^{-1}\bigr)^{-m/2},
\end{equation}
for all $j \ge m$, and by applying the latter inequality to \eqref{eq_aj_ratio}, we obtain
\[
\frac{a_{j+1}}{a_j} \le \bigl(1+m^{-1}\bigr)^{-m/2} {\rm e}^{1/2} t \frac{j^{1/4}}{(j+1)^{3/4}}
= c_m^{1/2} t \frac{j^{1/4}}{(j+1)^{3/4}},
\]
where \smash{$c_m = \bigl(1+m^{-1}\bigr)^{-m} {\rm e}$}. It follows that, for all $k \ge m$,
\begin{align}
\label{eq_a_k}
a_k = a_m \prod_{j=m}^{k-1} \frac{a_{j+1}}{a_j}
\le a_m \prod_{j=m}^{k-1} c_m^{1/2} t \frac{j^{1/4}}{(j+1)^{3/4}}= a_m c_m^{(k-m)/2} t^{k-m} \prod_{j=m}^{k-1} \frac{j^{1/4}}{(j+1)^{3/4}}.
\end{align}
Since \smash{$\prod_{j=m}^{k-1} j = (k-1)!/(m-1)!$}, then
\begin{align}
\label{eq_a_k_factor}
\prod_{j=m}^{k-1} \frac{j^{1/4}}{(j+1)^{3/4}} &= \frac{[(k-1)!/(m-1)!]^{1/4}}{[k!/m!]^{3/4}}
= \frac{m^{3/4} [(m-1)!]^{1/2}}{k^{3/4} [(k-1)!]^{1/2}}.
\end{align}
On applying to \eqref{eq_a_k} the identity \eqref{eq_a_k_factor} and the explicit expression for $a_m$, as given in \eqref{eq_aj}, we obtain
\begin{align}
\label{eq_a_k_bound}
a_k &\le m^{-(2m+1)/4} {\rm e}^{m/2} t^m \cdot c_m^{(k-m)/2} t^{k-m} \frac{m^{3/4} [(m-1)!]^{1/2}}{k^{3/4} [(k-1)!]^{1/2}} \nonumber \\
&= ({\rm e}/m c_m)^{m/2} (m!)^{1/2} \frac{c_m^{k/2} t^k}{k^{3/4} [(k-1)!]^{1/2}}.
\end{align}
Therefore, by \eqref{eq_R_m_bound}, \eqref{eq_aj}, and \eqref{eq_a_k_bound},
\begin{align}
\label{eq_sums_2_bound}
R_m(t) &\le (2\pi)^{-1/4} ({\rm e}/m c_m)^{m/2} (m!)^{1/2} \sum_{k=m}^\infty \frac{c_m^{k/2} t^k}{k^{3/4} [(k-1)!]^{1/2}} \nonumber \\
&\le (2\pi)^{-1/4} ({\rm e}/m c_m)^{m/2} (m!)^{1/2} \Bigg(\sum_{k=m}^\infty \frac{1}{k^{3/2}}\Bigg)^{1/2} \Bigg(\sum_{k=m}^\infty \frac{c_m^k t^{2k}}{(k-1)!}\Bigg)^{1/2},
\end{align}
where the latter inequality follows by the Cauchy--Schwarz inequality.

To obtain an upper bound for the first sum in \eqref{eq_sums_2_bound}, we observe from graphical considerations using Riemann sums that, for $m \ge 2$,
\begin{equation}
\label{eq_zeta_32}
\sum_{k=m}^\infty \frac{1}{k^{3/2}} \le \int_m^\infty \frac{\dd t}{(t-1)^{3/2}} = \frac{2}{(m-1)^{1/2}} \le \left(\frac{8}{m}\right)^{1/2}.
\end{equation}
As for the second sum in \eqref{eq_sums_2_bound}, we apply the inequality
$
\frac{1}{(k+m-1)!} \le \frac{1}{k! (m-1)!}
$
to obtain
\begin{align}
\label{eq_partial_exp}
\sum_{k=m}^\infty \frac{t^{2k}}{(k-1)!} &= \sum_{k=0}^\infty \frac{t^{2(k+m)}}{(k+m-1)!} \le \frac{t^{2m}}{(m-1)!} \sum_{k=0}^\infty \frac{t^{2k}}{k!}
= \frac{t^{2m}}{(m-1)!} \exp\bigl(t^2\bigr).
\end{align}

Applying \eqref{eq_zeta_32} and \eqref{eq_partial_exp} to \eqref{eq_sums_2_bound}, we obtain
\begin{align}
R_m(t) &\le (2\pi)^{-1/4} ({\rm e}/m c_m)^{m/2} (m!)^{1/2} \left(\frac{8}{m}\right)^{1/4} \left(\frac{c_m^m t^{2m}}{(m-1)!} \exp\bigl(c_m t^2\bigr)\right)^{1/2} \nonumber \\
&= (4 {\rm e}/\pi)^{1/4} ({\rm e}/m)^{(m/2)-(1/4)} t^m \exp\bigl(c_m t^2/2\bigr),\label{eq_R_m_bound2}
\end{align}
and by inserting this bound at \eqref{eq_Phi_m_bound3}, we obtain \eqref{eq_Phi_m_boundw}.

Finally, it follows from \eqref{eq_Phi_m_bound} that \smash{$\Phi_{d,p;m}(A,\Sigma) = O\bigl(d^{-(1-r)m/2}\bigr)$} as $d \to \infty$.
\end{proof}

{\textbf{Comments on Remark \ref{rem_boundw_Bingham}}}.
Denote by $b_m$ the bound in \eqref{eq_R_m_bound2}. By direct calculation, we obtain
\begin{equation}
\label{eq_b_m_ratio}
\frac{b_{m+1}}{b_m} = t \Big(\frac{m}{m+1}\Big)^{(m/2)-(1/4)} \Big(\frac{\rm e}{m+1}\Big)^{1/2} \exp\bigl((c_{m+1}-c_m) t^2/2\bigr).
\end{equation}
By \eqref{eq_c_m}, $c_{m+1} - c_m < 0$; hence the latter three terms in \eqref{eq_b_m_ratio} each are strictly less than $1$, so we obtain $b_{m+1}/b_m < t$. Therefore, if $t \le 1$, then $b_{m+1} < b_m$, i.e., the sequence $\{b_m, m \ge 2\}$, is strictly decreasing; and by applying this result to \eqref{eq_Phi_m_boundw}, we obtain the statement in the first part of Remark~\ref{rem_boundw_Bingham}. We have also determined from simple numerical calculations that $b_m$ remains strictly decreasing for $t \le 1.5$.

For larger values of $t$, the sequence $b_m$ is unimodal. This can be proved by treating $m$ (temporarily) as a continuous variable, calculating the logarithmic derivative of \eqref{eq_b_m_ratio} with respect to $m$, and verifying that there exists an $m_0$ such that the derivative is positive for all~${m \le m_0}$ and negative for $m > m_0$. By graphing the bound \eqref{eq_R_m_bound2} as a function of $m$, we have observed that it appears to be unimodal for all $t \ge 1.6$.

\begin{proof}[Proof of Theorem \ref{thm_Phi_m_lower_bound_Bingham}]
Since $A$, $\Sigma$ are positive definite, then $C_\kappa(A) > 0$ and $C_\kappa(\Sigma) > 0$ for all partitions $\kappa$. By Proposition~\ref{prop_zonal_lower_bound},
\begin{align*}
\Phi_{d,p;m}(A,\Sigma) &= \sum_{k=m}^\infty \frac{1}{k!} \sum_{|\kappa|=k} \frac{C_\kappa(A) C_\kappa(\Sigma)}{C_\kappa(I_d)} \ge \sum_{k=m}^\infty \frac{1}{k!} \sum_{|\kappa|=k} C_\kappa(A) (1+|\kappa|)^{-N_d} \sigma_{(1)}^{\kappa_1} \cdots \sigma_{(p)}^{\kappa_p} \\
&= \sum_{k=m}^\infty \frac{(1+k)^{-N_d}}{k!} \sum_{|\kappa|=k} C_\kappa(A) \sigma_{(1)}^{\kappa_1} \cdots \sigma_{(p)}^{\kappa_p}.
\end{align*}
Since $\sigma_{(1)} \ge \cdots \ge \sigma_{(d)} > 0$, then \smash{$\sigma_{(1)}^{\kappa_1} \cdots \sigma_{(p)}^{\kappa_p} \ge \sigma_{(p)}^{|\kappa|}$} for any partition $\kappa$ of length $p$. Therefore, \[
\sum_{|\kappa|=k} C_\kappa(A) \sigma_{(1)}^{\kappa_1} \cdots \sigma_{(p)}^{\kappa_p} \ge \sigma_{(p)}^{k} \sum_{|\kappa|=k} C_\kappa(A) = \sigma_{(p)}^k (\tr A)^k,
\]
so we obtain
\[
\Phi_{d,p;m}(A,\Sigma) \ge \sum_{k=m}^\infty \frac{(1+k)^{-N_d} (\sigma_{(p)} \tr A)^k}{k!}.
\]

We now apply the telescoping method to bound the latter series. Let $\tau \equiv \sigma_{(p)} \tr(A)$ and, for~${j \ge m}$, define
\[
a_j = \frac{(1+j)^{-N_d} \tau^j}{j!}.
\]
Then
\[
\frac{a_{j+1}}{a_j} = \bigl(\frac{1+j}{2+j}\bigr)^{N_d} \frac{\tau}{1+j}.
\]
It is straightforward that, for $j \ge m$,
$\frac{1+j}{2+j} \ge \frac{1+m}{2+m}$,
and therefore
\[
\frac{a_{j+1}}{a_j} \ge \bigl(\frac{1+m}{2+m}\bigr)^{N_d} \frac{\tau}{1+j}.
\]
Hence for all $k \ge m$,
\begin{align*}
a_k = a_m \prod_{j=m}^{k-1} \frac{a_{j+1}}{a_j} &\ge a_m \prod_{j=m}^{k-1} \bigg[\left(\frac{1+m}{2+m}\right)^{N_d} \frac{\tau}{1+j}\bigg] = a_m \Big(\frac{1+m}{2+m}\Big)^{(k-m)N_d} \frac{m! \tau^{k-m}}{k!}.
\end{align*}
Substituting for $a_m$ and then simplifying, we obtain
\[
a_k \ge (2+m)^{mN_d} (1+m)^{-(1+m)N_d} \left(\frac{1+m}{2+m}\right)^{kN_d} \frac{\tau^k}{k!},
\]
and therefore
\[
\Phi_{d,p;m}(A,\Sigma) \ge \sum_{k=m}^\infty a_k \ge (2+m)^{mN_d} (1+m)^{-(1+m)N_d} \sum_{k=m}^\infty \left(\frac{1+m}{2+m}\right)^{kN_d} \frac{\tau^k}{k!}.
\]
The proof now is complete.
\end{proof}

\subsection[The proofs for Section 3]{The proofs for Section \ref{sec_matrix_langevin}}
\label{sec_proofs_Psi_bounds}

\begin{proof}[Proof of Theorem \ref{thm_Psi_m_bound_Langevin}]
Since $B'B$ is positive semidefinite, then all its eigenvalues are nonnegative; hence, by \eqref{eq_zonal_msf}, $C_\kappa\bigl(\frac14 B'B\bigr) \ge 0$. Also, it follows from \eqref{eq_partit_fact} that $(d/2)_\kappa > 0$. Therefore, it follows from \eqref{eq_Psi_m_Langevin} that $\Psi_{d,p;m}(B) \ge 0$.

By \cite[Section 5.6]{Horn}, $\|B'B\| \le \|B\|^2$. Denoting $\tfrac14 B'B$ by $\Lambda$, it follows from the assumption on~$B$ that
\smash{$\|\Lambda\| = \frac{1}{4} \|B'B\| \le \frac{1}{4} \|B\|^2 \le \gamma_0 d^{r/2}$}.
Since $\Lambda$ is positive semidefinite, then by Proposition~\ref{prop_zonal_bound},
\[
C_\kappa(\Lambda) \le \Bigg(\prod_{i=1}^p \frac{\bigl(d^{1/2}/2\bigr)_{p \kappa_i}}{(d/2)_{p \kappa_i}}\Bigg)^{1/p} C_\kappa(I_p) \|\Lambda\|^{|\kappa|}.
\]
Applying this bound to \eqref{eq_Psi_m_Langevin}, we find that
\[
\Psi_{d,p;m}(B) \le \sum_{k=m}^\infty \frac{\|\Lambda\|^k}{k!} \sum_{|\kappa|=k} \frac{C_\kappa(I_p)}{(d/2)_\kappa} \Bigg(\prod_{i=1}^p \frac{\bigl(d^{1/2}/2\bigr)_{p \kappa_i}}{(d/2)_{p \kappa_i}}\Bigg)^{1/p}.
\]

Next we apply the lower bound in \eqref{eq_d_kappa_ineq} and the bound in \eqref{eq_d_kappa_ratio_ineq3} to obtain
\begin{align*}
\Psi_{d,p;m}(B) &\le \sum_{k=m}^\infty \frac{\|\Lambda\|^k}{k!} \sum_{|\kappa|=k} C_\kappa(I_p) \cdot (2p)^{|\kappa|} d^{-|\kappa|} \cdot \alpha_p \gamma_1^{|\kappa|} d^{-|\kappa|/2} p^{|\kappa|/2} (|\kappa|!)^{1/2} \\
&= \alpha_p \sum_{k=m}^\infty \frac{\bigl(2 \gamma_1 p^{3/2} d^{-3/2} \|\Lambda\|\bigr)^k}{(k!)^{1/2}} \sum_{|\kappa|=k} C_\kappa(I_p).
\end{align*}
By \eqref{eq_trace_zonal} and the bound $\|\Lambda\| \le \gamma_0 d^{r/2}$, we deduce that
\begin{align*}
\Psi_{d,p;m}(B) &\le \alpha_p \sum_{k=m}^\infty \frac{\bigl(2 \gamma_1 p^{3/2} d^{-3/2} \cdot \gamma_0 d^{r/2}\bigr)^k}{(k!)^{1/2}} p^k = \alpha_p \sum_{k=m}^\infty \frac{\bigl(2 \gamma_0 \gamma_1 p^{5/2} d^{-(3-r)/2}\bigr)^k}{(k!)^{1/2}}.
\end{align*}
Noting that the latter series is identical with $R_m\bigl(2 \gamma_0 \gamma_1 p^{5/2} d^{-(3-r)/2}\bigr)$, where $R_m(t)$ is given in~\eqref{eq_R_m}, we apply the inequality \eqref{eq_R_m_bound2} to obtain \eqref{eq_Psi_m_boundw}.

Finally, it follows from \eqref{eq_Psi_m_bound} that $\Psi_{d,p;m}(B) = O\bigl(d^{-(3-r)m/2}\bigr)$ as $d \to \infty$.
\end{proof}

\begin{proof}[Proof of Theorem \ref{thm_Psi_m_lower_bound_Langevin}]
Since $B$ is of full rank, then $B'B$ is positive definite. Letting $\beta_{(1)} \ge \cdots \ge \beta_{(p)}$ denote the ordered eigenvalues of $\tfrac14 B'B$, then by Proposition \ref{prop_zonal_lower_bound},
\begin{align*}
\Psi_{d,p;m}(B) &= \sum_{k=m}^\infty \frac{1}{k!} \sum_{|\kappa|=k} \frac{C_\kappa\bigl(\frac14 B'B\bigr)}{(d/2)_\kappa} \ge \sum_{k=m}^\infty \frac{1}{k!} \sum_{|\kappa|=k} \frac{(1+|\kappa|)^{-N_p} C_\kappa(I_p)}{(d/2)_\kappa} \beta_{(1)}^{\kappa_1} \cdots \beta_{(p)}^{\kappa_p} \\
&\ge \sum_{k=m}^\infty \frac{(1+k)^{-N_p} \beta_{(p)}^k}{k!} \sum_{|\kappa|=k} \frac{C_\kappa(I_p)}{(d/2)_\kappa} \beta_{(1)}^{\kappa_1} \cdots \beta_{(p)}^{\kappa_p},
\end{align*}
since the ordering of the $\beta_{(j)}$ implies that \smash{$\beta_{(1)}^{\kappa_1} \cdots \beta_{(p)}^{\kappa_p} \ge \beta_{(p)}^{|\kappa|}$}.

By the upper bound in \eqref{eq_d_kappa_ineq}, we have
\[
\frac{1}{(d/2)_\kappa} \ge 2^{|\kappa|} (d+|\kappa|)^{-|\kappa|};
\]
therefore
\begin{align*}
\Psi_{d,p;m}(B) &\ge \sum_{k=m}^\infty \frac{(1+k)^{-N_p} \beta_{(p)}^k}{k!} \sum_{|\kappa|=k} 2^{|\kappa|} (d+|\kappa|)^{-|\kappa|} C_\kappa(I_p) \\
&= \sum_{k=m}^\infty \frac{(1+k)^{-N_p} 2^k (d+k)^{-k} \beta_{(p)}^k}{k!} \sum_{|\kappa|=k} C_\kappa(I_p) \\
&= \sum_{k=m}^\infty \frac{(1+k)^{-N_p} (d+k)^{-k}}{k!} (2p\beta_{(p)})^k.
\end{align*}
Since each term in the latter series is positive, then we obtain the single-term lower bound,
\[
\Psi_{d,p;m}(B) \ge \frac{(1+m)^{-N_p} (d+m)^{-m}}{m!} (2p\beta_{(p)})^m,
\]
and this bound also converges to $0$ as $d \to \infty$ or $m \to \infty$.

We apply now the telescoping method to obtain a sharper lower bound for $\Psi_{d,p;m}(B)$. Let~${\tau \equiv 2p\beta_{(p)}}$ and, for $j \ge m$, define
\smash{$a_j = (1+j)^{-N_p} (d+j)^{-j} \tau^j/j!$}.
Then
\begin{align*}
\frac{a_{j+1}}{a_j} &= \bigg(\frac{1+j}{2+j}\bigg)^{N_p} \bigg(\frac{d+j}{d+1+j}\bigg)^{j+1} \frac{\tau}{(j+1)(d+1+j)} \\
&\ge \bigg(\frac{1+m}{2+m}\bigg)^{N_p} {\rm e}^{-1} \frac{\tau}{(j+1)(d+1+j)}
\end{align*}
since
$
\frac{1+j}{2+j} \ge \frac{1+m}{2+m}$, \smash{$\bigl(\frac{d+j}{d+1+j}\bigr)^{j+1} \ge {\rm e}^{-1}$}
for all $j \ge m$ and all $d \ge 1$. Therefore, for all $k \ge m$,
\begin{align*}
a_k &= a_m \prod_{j=m}^{k-1} \frac{a_{j+1}}{a_j}
\ge a_m \prod_{j=m}^{k-1} \bigg[\bigg(\frac{1+m}{2+m}\bigg)^{N_p} \frac{{\rm e}^{-1} \tau}{(j+1)(d+1+j)}\bigg] \\
&= a_m \frac{\mu^{k-m}}{(m+1)_{k-m} (d+1+m)_{k-m}},
\end{align*}
where
\smash{$\mu = \bigl(\frac{1+m}{2+m}\bigr)^{N_p} {\rm e}^{-1} \tau$}.
Therefore, \begin{align*}
\Psi_{d,p;m}(B) \ge \sum_{k=m}^\infty a_k &\ge a_m \sum_{k=m}^\infty \frac{\mu^{k-m}}{(m+1)_{k-m} (d+1+m)_{k-m}} \\
&= a_m \sum_{k=0}^\infty \frac{\mu^k}{(m+1)_k (d+1+m)_k} \\
&= a_m {}_1F_2(1;m+1,d+1+m;\mu).
\end{align*}
This establishes \eqref{eq_Psi_m_lower_bound_Langevin}.
\end{proof}

\section{Concluding remarks}
\label{sec_conclusions}

In this article, we presented an approach, to calculating the normalizing constants of the matrix Bingham and matrix Fisher distributions, that is alternative to numerical simulation methods. Although Monte Carlo computations may be adequate for approximating the normalizing constants $\Phi_{d,p}$ and $\Psi_{d,p}$, such numerical simulations can only yield estimates of the errors in the computed values of the normalizing constants and cannot provide provable bounds on those errors.\looseness=-1

The approach given in the present article provides, under stated hypotheses, guaranteed upper bounds for the remainder terms, and therefore is able to calculate the normalizing constants to any desired level of accuracy. Further, the methods developed here provides explicit rates of convergence as $d \to \infty$ or $m \to \infty$, and also deduces in some instances lower bounds on the accuracy of each remainder term.

We note that the methods used in this article are applicable to a general class of distributions, on the Stiefel and Grassmann manifolds, as described by Chikuse \cite[pp.~35--36]{Chikuse}. Further, the approach given here applies to the complex matrix Bingham and complex matrix Langevin distributions; this can be done by making the necessary changes from the real zonal polynomials to the complex zonal polynomials and from the generalized hypergeometric functions of real symmetric matrix argument to the generalized hypergeometric functions of Hermitian matrix argument \cite{James64}. In particular, we note that the complex analog of \eqref{eq_zonal}, the crucial integral representation for the zonal polynomials, is provided in \cite[Theorem 4.8]{GrossRichards}.

Finally, we remark that all the results in this article extend, {\em mutatis mutandis}, to arbitrary symmetric cones. A basic principle observed from \cite{DingGrossRichards,GrossRichards} is that methods that are successful for the cone of real positive definite matrices will remain successful for any symmetric cone. Indeed, the results that we have established can be extended to derive rates of convergence of the spherical polynomial series expansions of all generalized hypergeometric functions defined on the symmetric cones.

\subsection*{Acknowledgements}

We are grateful to referees and an editor who read our manuscript with great thoroughness and provided us with incisive and perceptive comments.

\pdfbookmark[1]{References}{ref}
\LastPageEnding


\begin{thebibliography}{99}
\footnotesize\itemsep=0pt

\bibitem{BagyanRichards}
Bagyan A., Richards D., Complete asymptotic expansions and the high-dimensional
 {B}ingham distributions, \href{https://doi.org/10.1007/s11749-023-00910-w}{\textit{TEST}} \textbf{33} (2024), 540--563,
 \href{https://arxiv.org/abs/2303.04575}{arXiv:2303.04575}.

\bibitem{Bingham}
Bingham C., An antipodally symmetric distribution on the sphere, \href{https://doi.org/10.1214/aos/1176342874}{\textit{Ann.
 Statist.}} \textbf{2} (1974), 1201--1225.

\bibitem{BinghamCR}
Bingham C., Chang T., Richards D., Approximating the matrix {F}isher and
 {B}ingham distributions: applications to spherical regression and
 {P}rocrustes analysis, \href{https://doi.org/10.1016/0047-259X(92)90072-N}{\textit{J.~Multivariate Anal.}} \textbf{41} (1992),
 314--337.

\bibitem{Chikuse}
Chikuse Y., Statistics on special manifolds, \textit{Lect. Notes Stat.}, Vol.
 174, \href{https://doi.org/10.1007/978-0-387-21540-2}{Springer}, New York, 2003.

\bibitem{DeWaal}
de~Waal D.J., On the normalizing constant for the
 {B}ingham--von~{M}ises--{F}isher matrix distribution, \href{https://doi.org/10520/AJA0038271X_687}{\textit{South African
 Statist.~J.}} \textbf{13} (1979), 103--112.

\bibitem{DingGrossRichards}
Ding H., Gross K.I., Richards D.St.P., Ramanujan's master theorem for symmetric
 cones, \textit{Pacific~J. Math.} \textbf{175} (1996), 447--490.

\bibitem{Dryden}
Dryden I.L., Statistical analysis on high-dimensional spheres and shape spaces,
 \href{https://doi.org/10.1214/009053605000000264}{\textit{Ann. Statist.}} \textbf{33} (2005), 1643--1665, \href{https://arxiv.org/abs/math/0508279}{arXiv:math/0508279}.

\bibitem{Edelman}
Edelman A., Kostlan E., Shub M., How many eigenvalues of a random matrix are
 real?, \href{https://doi.org/10.2307/2152729}{\textit{J.~Amer. Math. Soc.}} \textbf{7} (1994), 247--267.

\bibitem{FarautKoranyi}
Faraut J., Kor\'anyi A., Analysis on symmetric cones, \textit{Oxford Math. Monogr.}, \href{https://doi.org/10.1093/oso/9780198534778.001.0001}{The
 Clarendon Press}, Oxford University Press, New York, 1994.

\bibitem{GranadosGarcia}
Granados-Garcia G., Fiecas M., Babak S., Fortin N.J., Ombao H., Brain waves
 analysis via a non-parametric {B}ayesian mixture of autoregressive kernels,
 \href{https://doi.org/10.1016/j.csda.2021.107409}{\textit{Comput. Statist. Data Anal.}} \textbf{174} (2022), 107409, 181~pages,
 \href{https://arxiv.org/abs/2102.11971}{arXiv:2102.11971}.

\bibitem{GrossRichards}
Gross K.I., Richards D.St.P., Special functions of matrix argument.~{I}.
 {A}lgebraic induction, zonal polynomials, and hypergeometric functions,
 \href{https://doi.org/10.2307/2000670}{\textit{Trans. Amer. Math. Soc.}} \textbf{301} (1987), 781--811.

\bibitem{Hoff}
Hoff P.D., Simulation of the matrix {B}ingham-von {M}ises--{F}isher
 distribution, with applications to multivariate and relational data,
 \href{https://doi.org/10.1198/jcgs.2009.07177}{\textit{J.~Comput. Graph. Statist.}} \textbf{18} (2009), 438--456,
 \href{https://arxiv.org/abs/0712.4166}{arXiv:0712.4166}.

\bibitem{Holbrook_etal}
Holbrook A., Vandenberg-Rodes A., Shahbaba B., Bayesian inference on matrix
 manifolds for linear dimensionality reduction, \href{https://arxiv.org/abs/1606.04478}{arXiv:1606.04478}.

\bibitem{Horn}
Horn R.A., Johnson C.R., Matrix analysis, 2nd ed., \href{https://doi.org/10.1017/CBO9781139020411}{Cambridge University Press},
 Cambridge, 2012.

\bibitem{Impens}
Impens C., Stirling's series made easy, \href{https://doi.org/10.2307/3647856}{\textit{Amer. Math. Monthly}}
 \textbf{110} (2003), 730--735.

\bibitem{James64}
James A.T., Distributions of matrix variates and latent roots derived from
 normal samples, \href{https://doi.org/10.1214/aoms/1177703550}{\textit{Ann. Math. Statist.}} \textbf{35} (1964), 475--501.

\bibitem{James68}
James A.T., Calculation of zonal polynomial coefficients by use of the
 {L}aplace--{B}eltrami operator, \href{https://doi.org/10.1214/aoms/1177698153}{\textit{Ann. Math. Statist.}} \textbf{39}
 (1968), 1711--1718.

\bibitem{James73}
James A.T., The variance information manifold and the functions on it, in
 Multivariate {A}nalysis~{III}, \href{https://doi.org/10.1016/B978-0-12-426653-7.50016-8}{Academic Press}, New York, 1973, 157--169.

\bibitem{JauchHoffDunson}
Jauch M., Hoff P.D., Dunson D.B., Monte {C}arlo simulation on the {S}tiefel
 manifold via polar expansion, \href{https://doi.org/10.1080/10618600.2020.1859382}{\textit{J.~Comput. Graph. Statist.}} \textbf{30}
 (2021), 622--631, \href{https://arxiv.org/abs/1906.07684}{arXiv:1906.07684}.

\bibitem{JiuKoutschan}
Jiu L., Koutschan C., Calculation and properties of zonal polynomials,
 \href{https://doi.org/10.1007/s11786-020-00458-0}{\textit{Math. Comput. Sci.}} \textbf{14} (2020), 623--640,
 \href{https://arxiv.org/abs/2001.11599}{arXiv:2001.11599}.

\bibitem{JuppMardia}
Jupp P.E., Mardia K.V., Maximum likelihood estimators for the matrix von
 {M}ises--{F}isher and {B}ingham distributions, \href{https://doi.org/10.1214/aos/1176344681}{\textit{Ann. Statist.}}
 \textbf{7} (1979), 599--606.

\bibitem{JuppMardia2}
Jupp P.E., Mardia K.V., A unified view of the theory of directional statistics,
 1975--1988, \href{https://doi.org/10.2307/1403799}{\textit{Int. Stat. Rev.}} \textbf{57} (1989), 261--294.

\bibitem{KhatriMardia}
Khatri C.G., Mardia K.V., The von {M}ises--{F}isher matrix distribution in
 orientation statistics, \href{https://doi.org/10.1111/j.2517-6161.1977.tb01610.x}{\textit{J.~Roy. Statist. Soc. Ser.~B}} \textbf{39}
 (1977), 95--106.

\bibitem{KumePrestonWood}
Kume A., Preston S.P., Wood A.T.A., Saddlepoint approximations for the
 normalizing constant of {F}isher--{B}ingham distributions on products of
 spheres and {S}tiefel manifolds, \href{https://doi.org/10.1093/biomet/ast021}{\textit{Biometrika}} \textbf{100} (2013),
 971--984.

\bibitem{KumeWood}
Kume A., Wood A.T.A., Saddlepoint approximations for the {B}ingham and
 {F}isher--{B}ingham normalising constants, \href{https://doi.org/10.1093/biomet/92.2.465}{\textit{Biometrika}} \textbf{92}
 (2005), 465--476.

\bibitem{Kushner}
Kushner H.B., On the expansion of {$C^*_\rho(V+I)$} as a sum of zonal
 polynomials, \href{https://doi.org/10.1016/0047-259X(85)90096-X}{\textit{J.~Multivariate Anal.}} \textbf{17} (1985), 84--98.

\bibitem{Ledermann}
Ledermann W., Introduction to group characters, \href{https://doi.org/10.1017/CBO9780511565755}{Cambridge University Press},
 Cambridge, 1977.

\bibitem{Mantoux_etal}
Mantoux C., Couvy-Duchesne B., Cacciamani F., Epelbaum S., Durrleman S.,
 Allassonni\`ere S., Understanding the variability in graph data sets through
 statistical modeling on the {S}tiefel manifold, \href{https://doi.org/10.3390/e23040490}{\textit{Entropy}} \textbf{23}
 (2021), 490, 34~pages.

\bibitem{MardiaJupp}
Mardia K.V., Jupp P.E., Directional statistics, \textit{Wiley Ser. Probab. Stat.}, \href{https://doi.org/10.1002/9780470316979}{John
 Wiley \& Sons}, Chichester, 1999.

\bibitem{Muirhead}
Muirhead R.J., Aspects of multivariate statistical theory, \textit{Wiley Ser. Probab. Math. Stat.}, John Wiley \& Sons, New York, 1982,
 .

\bibitem{OualkachaRivest}
Oualkacha K., Rivest L.P., On the estimation of an average rigid body motion,
 \href{https://doi.org/10.1093/biomet/ass020}{\textit{Biometrika}} \textbf{99} (2012), 585--598.

\bibitem{ParkhurstJames}
Parkhurst A.M., James A.T., Zonal polynomials of order~1 through~12, in
 Selected {T}ables in {M}athematical {S}tatistics, {V}ol.~{II}, American
 Mathematical Society, Providence, RI, 1974, 199--388.

\bibitem{Prentice}
Prentice M.J., Antipodally symmetric distributions for orientation statistics,
 \href{https://doi.org/10.1016/0378-3758(82)90025-8}{\textit{J.~Statist. Plann. Inference}} \textbf{6} (1982), 205--214.

\bibitem{Richards85}
Richards D.St.P., Applications of invariant differential operators to
 multivariate distribution theory, \href{https://doi.org/10.1137/0145015}{\textit{SIAM~J. Appl. Math.}} \textbf{45}
 (1985), 280--288.

\bibitem{Richards10}
Richards D.St.P., Functions of matrix argument, in N{IST} {H}andbook of
 {M}athematical {F}unctions, Vol.~35, U.S. Department of Commerce, National
 Institute of Standards and Technology, Washington, DC, 2010, 767--774.

\bibitem{Sudlow_etal}
Sudlow C., Gallacher J., Allen N., Beral V., Burton P., Danesh J., Downey P.,
 Elliott P., Green J., Landray~M., UK Biobank: An open access resource for
 identifying the causes of a wide range of complex diseases of middle and old
 age, \href{https://doi.org/10.1371/journal.pmed.1001779}{\textit{PLoS Med.}} \textbf{12} (2015), e1001779, 11~pages.

\end{thebibliography}
\end{document}